
\documentclass[12pt]{article}
\usepackage{amsmath,amsthm,amssymb}
\usepackage{times}
\usepackage{enumerate}
\usepackage{amsfonts}
\usepackage{fancyhdr}
\usepackage{titlesec}
\usepackage{cite}
\usepackage{ifthen}
\usepackage{amssymb}
\usepackage{fancyhdr}
\usepackage{titlesec}
\usepackage[arrow,matrix]{xy}
\usepackage{pifont}
\usepackage{stmaryrd}
\usepackage{setspace}
\usepackage{indentfirst}
\usepackage{amsmath,amssymb,amscd,bbm,amsthm,mathrsfs,dsfont}
\pagestyle{myheadings}
\markboth{}{}


\theoremstyle{definition}
\newtheorem{theorem}{Theorem}[section]
\newtheorem{corollary}[theorem]{Corollary}
\newtheorem{lemma}[theorem]{Lemma}

\newtheorem{remark}[theorem]{Remark}




\numberwithin{equation}{section}

\frenchspacing

\textwidth=167mm
\textheight=23cm
\parindent=16pt
\oddsidemargin=-0.5cm
\evensidemargin=-0.5cm
\topmargin=-0.5cm

\newcommand{\subjclass}[1]{\bigskip\noindent\emph{2010 Mathematics Subject Classification:}\enspace#1}
\newcommand{\keywords}[1]{\noindent\emph{Keywords:}\enspace#1}



\begin{document}


\baselineskip=16pt


\title{On the  Cauchy problem of  a  sixth-order   Cahn-Hilliard equation  arising in oil-water-surfactant mixtures  }

\author{Xiaopeng Zhao\\
\small{College of Sciences, Northeastern University, Shenyang 110004, China }\\
\small{E-mail address: zhaoxiaopeng@mail.neu.edu.cn}
 }

\date{}

\maketitle


\begin{abstract}We study the global well-posedness and asymptotic behavior of solutions for the Cauchy problem of three-dimensional sixth order  Cahn-Hilliard equation arising in oil-water-surfactant mixtures. First, by using the pure energy method and a standard continuity argument, we prove that there exists a unique global strong solution provided that the $H^2$-norm of initial data is sufficiently small. Moreover, we also establish the suitable negative Sobolev norm estimates and obtain the time decay rate of strong solutions. It is worth pointing out that  although   the problem we considered is a sixth-order parabolic equation,  the time decay rate  is equivalent to the decay rate of fourth-order generalized heat equation, which is better than our expect.

\subjclass{ 35K25, 35A01, 35B40.}

\keywords{Sixth order  Cahn-Hilliard equation; global well-posedness; decay rate; pure energy method. }
\end{abstract}

{\small\small
 \section{Introduction}

In 1990s, Gompper et. al.\cite{Gompper1,Gompper2} introduced the following free energy
functional
\begin{equation}
\label{1-a}
\mathcal{F}\{u\}=\int G(u,\nabla u,\Delta u)dx,
\end{equation}
with the density given by
$$
G(u,\nabla u,\Delta u)=f(u)+\frac12a(u)|\nabla u|^2+\frac{\delta}2|\Delta u|^2,
$$
to describe the dynamics of phase transitions in ternary oil-water-surfactant systems, where
  $u(x,t)$ describes the scalar order parameter which is proportional to the local difference between oil and water concentrations, $\delta$ denotes the mobility and the second gradient energy coefficient,  $a(u)$ is the first gradient energy coefficient which may be of arbitrary sign,  and $f(u)$ denotes the multiwell volumetric free energy density \cite{PI,vis,vis2}, respectively.

Let $M$ be the mobility and $\mu$   the chemical potential difference between the oil and water phases. Applying mass conservation, i.e.
$$\frac{\partial u}{\partial t}=-\hbox{div} j,$$
with the mass flux $j$ given by
$$
j=-M\nabla \mu.
$$
Moreover, the chemical potential can be defined by the constitutive equation
$$
\mu=\frac{\delta \mathcal{F}\{u\}}{\delta u},
$$
where $\frac{\delta \mathcal{F}\{u\}}{\delta u}$ is the first variation of the function $\mathcal{F}\{u\}$. Let the mobility $M\equiv\hbox{const}$, 
we end up with the following sixth order viscous Cahn-Hilliard type equation
\begin{equation} \label{1-b}
\left\{ \begin{aligned}
        &u_t=M\Delta\mu, \\
                  &\mu=\delta\Delta^2u-a(u)\Delta u-\frac12a'(u)|\nabla u|^2+f(u).
                          \end{aligned} \right.
                          \end{equation}

Many papers have studied the initial boundary value problem of equation (\ref{1-b}) from the point of view of global well-posedness.
In \cite{PI},
 Pawlow and Zajaczkowski assumed that the considered space $\Omega\subset\mathbb{R}^3$ is a bounded domain with a boundary  of class $C^6$, the free energy density $f(u)$ is a sixth order polynomial,  $a(u)=g_0+g_2u^2$ with $g_0\in\mathbb{R}$ and $g_2>0$,
 proved the existence of unique global smooth solution which depends continuously on the initial datum;
Moreover, by using  the B\"{a}cklund transformation and the Leray-Schauder fixed point theorem, Pawlow and Zajaczkowski\cite{PI2} proved the global unique solvability of equation (\ref{1-b})   in the Sobolev space $H^{6,1}(\Omega\times(0,T))$ under the assumption that the initial datum is in $H^3(\Omega)$.
    Schimperna and Pawlow \cite{vis2}  discussed the existence, uniqueness and parabolic regularization of a weak solution to the initial boundary value problem of equation (\ref{1-b}) with a singular (e.g., logarithmic) character. The authors also supposed that the parameter $\delta=0$, invistigated a fourth order system, considered the existence of weak solutions under very general conditions by means of a fixed point argument. In 2013,  Schimperna and Pawlow \cite{Sch} continued to study the fourth order system with singular diffusion. The authors proved that, for any final time $T$, the system admits a unique energy type weak solution, and for any $\tau>0$, such solution is classical. Moreover, based on Leray-Schauder's fixed point theorem and Campanato spaces, Liu and Wang \cite{Liu1} prove the existence of time-periodic solutions for the initial-boundary value problem of  equation (\ref{1-b}) in two space dimensions.

  There's also some results on the  equation (\ref{1-b}) with viscous term.   In the paper of  Pawlow and Zajaczkowski \cite{vis}, applying Leray-Schauder fixed point theorem and suitable estimates, the authors established the existence and uniqueness of a global in time regular solution. Very recently, based on the estimates of  weighted Sobolev spaces, Duan and Zhao \cite{DZ} considered the existence of global attractor for equation (\ref{1-b}) with viscous term in a 2D belt unbounded domain.

A Cauchy problem in mathematics asks for the solution of a partial differential equation that satisfies certain conditions that are given on a hypersurface in the domain.
It is worth pointing out that   the study of Cauchy problem on higher order nonlinear diffusion equations    is also interesting. There are many classical results related to this topic  (e.g., Caffarelli and Muler\cite{Caffarelli}, Dlotko, Kania and Sun\cite{Dlotko1},   Cholewa and Rodriguez-Bernal\cite{Cholewa1}  for the global existence of higher order diffusion equations; Dlotko and Sun\cite{Dlotko2}, Savostianov and Zelik\cite{Savo} for the global dynamics of higher order diffusion equations). As far as we know, there's no reference concerning this aspect of the sixth-order  Cahn-Hilliard equation till now. Can we establish some well-posedness results for the Cauchy problem of sixth-order Cahn-Hilliard equation (\ref{1-b})? We will answer this question in this paper.

Consider the Cauchy problem of sixth-order viscous Cahn-Hilliard equation (\ref{1-b})  in $\mathbb{R}^3$. The problem is state as follows:
\begin{equation} \label{1-1f}
\left\{ \begin{aligned}
        &u_t -\delta\Delta^3u=-\Delta\left[a(u)\Delta u+\frac12a'(u)|\nabla u|^2-f(u)\right], \quad x\in\mathbb{R}^3,\\
                  &u(x,0)=  u_0(x),
                          \end{aligned} \right.
                          \end{equation}
\begin{remark}There are some different choices on the functions $f(s)$ and $a(s)$. For example: on the basis of Landau-Ginzburg free energy,  Gompper et al.\cite{Gompper1,Gompper2} choose $f(s)$ and $a(s)$ as
 \begin{equation}\label{1-c}
f(s)=(s+1)^2(s^2+h_0)(s-1)^2,\quad a(s)=g_0+g_2s^2,
\end{equation}
where $h_0$, $g_0$ and $g_2>0$ are constants. Based on the fourth order gradient free energy, Pawlow and Zajaczkowski\cite{PI2} suppose that
 \begin{equation}\label{1-d}
f(s)=(1-\alpha)\frac{s^2}2+\frac{s^4}4,\quad a(s)=-2.
\end{equation}
A specific free energy with composition-dependent gradient energy coefficient $a(u)$ also arise in the modeling of phase separation in polymers\cite{23,vis}. This energy, known as Flory-Huggins-de Gennes energy, has the form (\ref{1-a}) with $\delta=0$,
$$F(s)=\int_0^sf(\tau)d\tau=
(1-s)\log(1-s)+(1+s)\log(1+s)-\frac{\lambda}2s^2,\quad\lambda\geq0,
$$ and the singular coefficient
$$
a(s)=\frac1{(1-s)(1+s)}.$$
\end{remark}
\begin{remark}
The sixth order Cahn-Hilliard equation model is only a phenomenological model. Hence, various modifications of it, for example, \cite{AM1,AM2,Ko,Ko1} and the reference therein,   has been proposed in order to capture the dynamical picture of the phase transition phenomena better.
\end{remark}

In this paper, we suppose that $f(s)$ and $a(s)$ satisfy (\ref{1-c}). There are two cases to be considered: First, the parameter $g_0>0$, equation (\ref{1-1f})$_1$ can be rewritten as
 \begin{equation} \label{1-1q1}
 u_t -\delta\Delta^3u+g_0\Delta^2u=-\Delta\left[g_2u^2\Delta u+g_2u|\nabla u|^2-(u-1)^2(u+1)^2(u^2+h_0)\right] .
                          \end{equation}
Second, if the parameter $g_0\leq 0$, we would like to rewrite problem (\ref{1-1f}) as
 \begin{equation} \label{1-1q2}
\begin{aligned}
       u_t -\delta\Delta^3u+(1-g_0)\Delta^2u=&-\Delta\left[ g_2\left(u+\sqrt{\frac{1-2g_0}{g_2}}\right)\left(u-\sqrt{\frac{1-2g_0}{g_2}}\right)\Delta u\right.
        \\
        &\left. +g_2u|\nabla u|^2-(u-1)^2(u+1)^2(u^2+h_0)\right]. \\
                  \end{aligned}
                          \end{equation}
For simplicity, we consider the following Cauchy problem in $\mathbb{R}^3$:
 \begin{equation} \label{1-1}
\left\{ \begin{aligned}
        &u_t -\delta\Delta^3u+\kappa_0\Delta^2u=-\Delta\left[ \kappa_1\left(u+ \kappa_2\right)\left(u-\kappa_2\right)\Delta u\right.
        \\
        &\left.\quad\quad\quad\quad\quad\quad\quad\quad\quad\quad\!\!+\kappa_1u|\nabla u|^2-(u-1)^2(u+1)^2(u^2+h_0)\right], \\
                  &u(x,0)=  u_0(x),
                          \end{aligned} \right.
                          \end{equation}
where $\delta>0$, $\kappa_0>0$, $\kappa_1>0$, $\kappa_2\geq0$ and $h_0\in\mathbb{R}$ are   constants. If $\kappa_0=g_0>0$, $\kappa_1=g_2$ and $\kappa_2=0$, then equation (\ref{1-1})$_1$ is equivalent to (\ref{1-1q1}). Moreover, if $\kappa_0=1-g_0>0$, $\kappa_1=g_2$ and $\kappa_2=\sqrt{\frac{1-2g_0}{g_2}}$, we obtain equation (\ref{1-1q2}).

\begin{remark}
Since we consider  problem (\ref{1-1}) in $\mathbb{R}^3$, the  Laplacian   $(-\Delta)^{\delta}$ ($\delta\in\mathbb{R}$) can be defined through the Fourier transform, namely
\begin{equation}\label{1-4c}
(-\Delta)^{\delta}f(x)=\Lambda^{2\delta}f(x)=\int_{\mathbb{R}^3}|x|^{2\delta}\hat{f}(\xi)e^{2\pi ix\cdot\xi}d\xi,
\end{equation}
where  $\widehat{f}$ is the Fourier transform of $f$. Moreover,  $\nabla^l$ with an integral $l\geq0$ stands for the usual spatial derivatives of order $l$. If $l<0$ or $l$ is not a positive integer, $\nabla^l$ stands for $\Lambda^l$ defined by (\ref{1-4c}). We also use $\dot{H}^s(\mathbb{R}^3)$ ($s\in\mathbb{R}$) to denote the homoegneous Sobolev spaces on $\mathbb{R}^3$ with the norm $\|\cdot\|_{H^s}$ defined by $\|f\|_{H^s}:=\|\Lambda^sf\|_{L^2}$, and we use $H^s(\mathbb{R}^3)$ and $L^p(\mathbb{R}^3)$ ($1\leq p\leq\infty$) to describe the usual Sobolev spaces with the norm $\|\cdot\|_{H^s}$ and the usual $L^p$ space with the norm $\|\cdot\|_{L^p}$.
We use the notation $A\lesssim B$ to mean that $A\leq cB$ for a universal constant $c>0$ that only depends on the parameters coming from the problem and the indexes $N$ and $s$  coming from the regularity on the data. We also employ $C$ for positive constant depending additionally on the initial data.
\end{remark}

The first purpose of this paper is to consider the  global well-posedness  of solutions for problem (\ref{1-1}). More precisely, we have the following theorem:
\begin{theorem}
\label{thm1.1}
Let $N\geq2$, assume that $u_0\in H^N(\mathbb{R}^3) $  and there exists a constant $\varepsilon_0>0$ such that if
\begin{equation}
\label{1-2}
\|u_0 \|_{H^2} \leq\varepsilon_0,
\end{equation}
then there exists a unique global solution $u(x,t)$ satisfying that for all $t\geq0$,
\begin{equation}
\label{1-3}\begin{aligned}
\|u(t)\|_{H^N}^2
 +\int_0^t(\|\nabla\Delta u(s)\|_{H^N}^2+\|\Delta  u(s)\|_{H^N}^2 )ds
 \leq C \|u_0\|_{H^N}^2 .\end{aligned}
\end{equation}
\end{theorem}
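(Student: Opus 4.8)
The plan is to run the pure energy method: differentiate (\ref{1-1}) by $\nabla^l$, pair in $L^2$ with $\nabla^l u$, sum over $0\le l\le N$, and close an energy--dissipation inequality whose nonlinear right-hand side carries a factor of $\|u\|_{H^2}$; a continuity argument then keeps this factor small for all time and promotes local solutions to the global bound (\ref{1-3}). \emph{Step 1: linear dissipation.} First I would apply $\nabla^l$ to (\ref{1-1}), take the $L^2$ inner product with $\nabla^l u$, and integrate by parts. Since the symbol of $-\delta\Delta^3+\kappa_0\Delta^2$ is $\delta|\xi|^6+\kappa_0|\xi|^4$, for each $l$ this yields
\[
\frac12\frac{d}{dt}\|\nabla^l u\|_{L^2}^2+\delta\|\nabla^{l+3}u\|_{L^2}^2+\kappa_0\|\nabla^{l+2}u\|_{L^2}^2=\big\langle\nabla^{l+1}\mathrm{NL},\nabla^{l+1}u\big\rangle ,
\]
where $\mathrm{NL}$ is the bracket on the right of (\ref{1-1}) and one factor $-\Delta$ has been moved onto $\nabla^l u$. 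Summing over $l$ produces exactly the dissipation $\mathcal D:=\|\nabla\Delta u\|_{H^N}^2+\|\Delta u\|_{H^N}^2$ of (\ref{1-3}). One bookkeeping point to settle here is that the linear piece $-\kappa_1\kappa_2^2\Delta u$ hidden inside $\kappa_1(u+\kappa_2)(u-\kappa_2)\Delta u$ feeds back a term $\kappa_1\kappa_2^2\|\nabla^{l+2}u\|_{L^2}^2$, so I would split it off at the outset and recombine it with $\kappa_0\Delta^2u$, tracking that the net fourth-order constant $\kappa_0-\kappa_1\kappa_2^2$ stays positive (it equals $\kappa_0$ in the $\kappa_2=0$ normalization).

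\emph{Step 2: nonlinear estimates.} The heart of the argument is to bound $\langle\nabla^{l+1}\mathrm{NL},\nabla^{l+1}u\rangle$ by $\|u\|_{H^2}\,\mathcal D$. After removing that linear piece, every remaining term---$\kappa_1u^2\Delta u$, $\kappa_1u|\nabla u|^2$, and $f(u)-f(0)$ (the constant drops under $\Delta$)---vanishes at least to second order at $u=0$, so $f'(u)=O(u)$ and the quadratic coefficients are $O(u^2)$. I would distribute derivatives by the Leibniz rule, choosing per term and per order $l$ how many $-\Delta$'s to integrate by parts, so that one low factor is estimated in $L^\infty$ through $H^2(\mathbb{R}^3)\hookrightarrow L^\infty$ (hence controlled by $\|u\|_{H^2}$) while the remaining factors are exactly the high derivatives $\|\nabla^{l+2}u\|_{L^2}$, $\|\nabla^{l+3}u\|_{L^2}$ making up $\mathcal D$; standard product and Gagliardo--Nirenberg estimates handle the intermediate factors. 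Summation over $0\le l\le N$ then gives
\[
\frac{d}{dt}\|u\|_{H^N}^2+c\,\mathcal D\le C\|u\|_{H^2}\,\mathcal D .
\]

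\emph{Step 3: continuity argument.} Local existence follows from the parabolic smoothing of the now-dissipative linear part by a routine fixed-point or Galerkin construction. I would then let $T^\ast=\sup\{t:\sup_{[0,t]}\|u(s)\|_{H^2}\le\eta\}$ with $\eta$ chosen so that $C\eta\le c/2$. On $[0,T^\ast)$ the smallness lets me absorb the right-hand side, leaving $\frac{d}{dt}\|u\|_{H^N}^2+\tfrac{c}{2}\mathcal D\le0$; integrating in time gives (\ref{1-3}), and in particular $\|u(t)\|_{H^2}$ is nonincreasing, so it stays below $\varepsilon_0<\eta$. Continuity of $t\mapsto\|u(t)\|_{H^2}$ then forbids $T^\ast<\infty$, and the solution is global.

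The hardest part will be the nonlinear estimates at the top order $l=N$ and at the low orders where only the weak fourth-order dissipation is available: because the symbol $\delta|\xi|^6+\kappa_0|\xi|^4$ degenerates as $|\xi|\to0$, the dissipation $\mathcal D$ controls only second and higher derivatives, so no bare norm such as $\|\nabla u\|_{L^2}$ may be left standing outside $\mathcal D$. Every term must be reorganized (for instance, at $l=0$ one uses $\langle\nabla(u^2\Delta u),\nabla u\rangle=-\int u^2|\Delta u|^2\,dx$ rather than the generic Leibniz split) so that the small factor is always $\|u\|_{H^2}$ and all other factors sit inside $\mathcal D$; otherwise the inequality would close only through Gronwall and would not yield the clean, growth-free bound (\ref{1-3}).
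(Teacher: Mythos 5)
Your proposal follows essentially the same route as the paper: $\nabla^l$-level energy identities with the sixth- and fourth-order dissipation on the left, the nonlinear terms bounded by a power of $\|u\|_{H^2}$ times the dissipation via Sobolev embedding and Gagliardo--Nirenberg interpolation, a fixed-point/iteration local existence step, and a standard continuity argument to globalize. Your bookkeeping remark about the hidden linear piece $-\kappa_1\kappa_2^2\Delta u$ and the resulting requirement that $\kappa_0-\kappa_1\kappa_2^2$ remain positive is in fact more careful than the paper's Lemma \ref{lem3.1}, which estimates $\|\nabla^m(u+\kappa_2)\|_{L^6}$ by $\|\nabla^{m+1}u\|_{L^2}$ even at $m=0$, where the constant $\kappa_2$ does not belong to $L^6(\mathbb{R}^3)$.
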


The asymptotic behavior of solutions is also an interesting topic in the study of the Cauchy problem of dissipative equations. In this paper, we also want to establish the results on the asymptotic behavior, i.e., we show the solutions of problem (\ref{1-1}) satisfy some negative exponent decay rate:
\begin{theorem}
\label{thm1.2}
Under the assumptions of Theorem \ref{thm1.1}, and assuming that  $ u_0 \in\dot{H}^{-s}(\mathbb{R}^3)$ for some $s\in[0,\frac12]$, then for all $t\geq0$,
\begin{equation}
\label{1-4}
\|\Lambda^{-s}u(t)\|_{L^2}  \leq C,
\end{equation}
and
\begin{equation}\label{1-5}
\|\nabla^lu(t)\|_{H^{N-l}} \leq C(1+t)^{-\frac{l+s}4},\quad\hbox{for}~l=0,1,\cdots, N-1 .
\end{equation}\end{theorem}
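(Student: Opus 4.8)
The plan is to run the pure energy method in tandem with negative Sobolev estimates, in the spirit of Guo--Wang. The starting point is a structural observation. Writing the equation as $u_t-\delta\Delta^3u+\kappa_0\Delta^2u=-\Delta G(u)$, where $G(u)$ collects the genuinely nonlinear terms $\kappa_1u^2\Delta u+\kappa_1u|\nabla u|^2-(u^2-1)^2(u^2+h_0)$ (the purely linear piece $-\kappa_1\kappa_2^2\Delta u$ being absorbed into the dissipative fourth-order operator, consistently with the net quartic dissipation recorded in (\ref{1-3})), the linear symbol is $\delta|\xi|^6+\kappa_0|\xi|^4$. For low frequencies the quartic term dominates, so $u$ behaves like the solution of a fourth-order heat flow; this is precisely the mechanism producing the rate $(1+t)^{-(l+s)/4}$ rather than the slower $(1+t)^{-(l+s)/6}$ one would naively expect from a sixth-order equation. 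The decay-governing dissipation is therefore the quartic one already visible in the term $\int_0^t\|\Delta u(s)\|_{H^N}^2\,ds$ of (\ref{1-3}).

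First I would upgrade the energy inequality behind Theorem \ref{thm1.1} to each differentiation level. Applying $\nabla^l$ to the equation, pairing with $\nabla^l u$ in $H^{N-l}$, and using the $H^2$-smallness from Theorem \ref{thm1.1} to absorb every nonlinear contribution into the dissipation, I expect to obtain, for each $l=0,1,\dots,N-1$,
\[
\frac{d}{dt}\|\nabla^l u\|_{H^{N-l}}^2+C_1\|\nabla^{l+2}u\|_{H^{N-l}}^2\le 0,
\]
where the $\nabla^{l+2}$ (quartic) dissipation is the weakest one available and hence dictates the decay, while the $\nabla^{l+3}$ dissipation coming from $\delta\Delta^3$ only helps.

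The heart of the argument, and the step I expect to be the \emph{main obstacle}, is the negative Sobolev estimate giving (\ref{1-4}). Applying $\Lambda^{-s}$ and pairing with $\Lambda^{-s}u$ yields
\[
\tfrac12\frac{d}{dt}\|\Lambda^{-s}u\|_{L^2}^2+\delta\|\Lambda^{-s}\nabla^3u\|_{L^2}^2+\kappa_0\|\Lambda^{-s}\nabla^2u\|_{L^2}^2=\big\langle\Lambda^{-s}\!\big(-\Delta G(u)\big),\,\Lambda^{-s}u\big\rangle .
\]
Integrating by parts twice turns the right-hand side into $\langle\Lambda^{-s}G(u),\Lambda^{-s}\Delta u\rangle$, which I would bound by the Hardy--Littlewood--Sobolev product estimates in $\dot{H}^{-s}$, of the type $\|\Lambda^{-s}(fg)\|_{L^2}\lesssim\|f\|_{L^2}\|g\|_{L^{3/s}}$ together with the embedding $\dot{H}^{3/2-s}\hookrightarrow L^{3/s}$, pairing the resulting factor $\|\Lambda^{-s}\nabla^2u\|_{L^2}$ against the quartic dissipation on the left to absorb it. This reduces matters to showing that $\int_0^t\|\Lambda^{-s}G(u)\|_{L^2}^2\,ds$ converges. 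It is this bookkeeping in $\mathbb{R}^3$ that forces the admissible range $s\in[0,\tfrac12]$: only then do the Sobolev orders $3/2-s\in[1,\tfrac32]$ stay within the range of norms already shown to decay. The genuinely subtle point is the apparent circularity, since estimating the nonlinear term needs decay of the positive norms, while the sharp decay of the positive norms needs (\ref{1-4}). I would break it by first running the interpolation below with $s=0$ (for which $\|u\|_{L^2}$ is merely bounded by (\ref{1-3})) to obtain the preliminary rates $\|\nabla^l u\|_{H^{N-l}}\lesssim(1+t)^{-l/4}$ for $l\ge1$, then feeding these into the integral above to show its convergence, which bounds $\|\Lambda^{-s}u(t)\|_{L^2}$ uniformly and gives (\ref{1-4}).

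Finally, (\ref{1-5}) follows by interpolation and an ODE argument. The Gagliardo--Nirenberg inequality gives $\|\nabla^l u\|_{L^2}\lesssim\|\Lambda^{-s}u\|_{L^2}^{\theta}\|\nabla^{l+2}u\|_{L^2}^{1-\theta}$ with $\theta=\tfrac{2}{l+2+s}$, so that, using (\ref{1-4}), the quartic dissipation is bounded below by $\|\nabla^{l+2}u\|_{H^{N-l}}^2\gtrsim\mathcal{E}_l^{\,1+\frac{2}{l+s}}$, where $\mathcal{E}_l:=\|\nabla^l u\|_{H^{N-l}}^2$ (after checking, as usual, that the full $H^{N-l}$ energy and its quartic dissipation are interchangeable up to the bounded negative norm). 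Inserting this into the level-$l$ energy inequality converts it into $\frac{d}{dt}\mathcal{E}_l+C\,\mathcal{E}_l^{\,1+\frac{2}{l+s}}\le0$, and integrating this differential inequality yields $\mathcal{E}_l\lesssim(1+t)^{-(l+s)/2}$, which is exactly (\ref{1-5}). The per-level energy inequalities and this concluding ODE are routine once the $H^2$-smallness of Theorem \ref{thm1.1} is available; the only delicate ingredient remains the negative-norm nonlinear estimate described above.
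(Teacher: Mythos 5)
Your overall architecture coincides with the paper's: a differential inequality for $\|\Lambda^{-s}u\|_{L^2}^2$, followed by the interpolation $\|\nabla^{l+2}u\|_{L^2}\gtrsim\|\Lambda^{-s}u\|_{L^2}^{-2/(l+s)}\|\nabla^l u\|_{L^2}^{1+2/(l+s)}$ and the ODE $\frac{d}{dt}\mathcal{E}_l+C\mathcal{E}_l^{1+2/(l+s)}\le0$; your concluding step is essentially identical to the paper's. The gap is in how you propose to close the negative Sobolev estimate, and it is a genuine one. You integrate by parts so that the nonlinearity appears underived, as $\langle\Lambda^{-s}G(u),\Lambda^{-s}\Delta u\rangle$, absorb $\|\Lambda^{-s}\Delta u\|_{L^2}$ into the dissipation, and reduce to the convergence of $\int_0^t\|\Lambda^{-s}G(u)\|_{L^2}^2\,ds$. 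But $G(u)$ contains the zero-derivative quadratic piece of $f(u)$ (namely $(1-2h_0)u^2$, since $f(u)-f(0)=u^6+(h_0-2)u^4+(1-2h_0)u^2$), for which the Hardy--Littlewood--Sobolev bound only gives $\|\Lambda^{-s}(u^2)\|_{L^2}\lesssim\|u\|_{L^{3/s}}\|u\|_{L^2}\lesssim\|\nabla^{3/2-s}u\|_{L^2}\|u\|_{L^2}$; neither factor is square-integrable in time ($\|u\|_{L^2}$ is merely bounded, and $3/2-s<2$ puts $\|\nabla^{3/2-s}u\|_{L^2}$ below the order controlled by the dissipation integral in (\ref{1-3})). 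Your proposed repair via preliminary $s=0$ rates does not help: those rates give only $\|\nabla^lu\|_{H^{N-l}}^2\lesssim(1+t)^{-l/2}$, so even $\|\nabla^2u\|_{H^{N-2}}^2\lesssim(1+t)^{-1}$ is borderline non-integrable, and the quantities actually appearing in your integral decay strictly slower than that. The paper avoids both problems by \emph{not} moving the outer Laplacian onto the test function for these terms: it expands $\Delta f(u)=f'(u)\Delta u+f''(u)|\nabla u|^2$ (using $f'(0)=0$), so every resulting term carries at least two derivatives of $u$, and the whole right-hand side is bounded by $\|\nabla^2u\|_{H^2}^2\|\Lambda^{-s}u\|_{L^2}$ — linear in $\sqrt{\mathcal{E}_{-s}}$ with a coefficient whose time integral is finite by (\ref{1-3}). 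Consequently the circularity you worry about does not exist: (\ref{1-4}) follows from the uniform dissipation bound of Theorem \ref{thm1.1} alone, with no decay input whatsoever.

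A secondary, but not harmless, issue is your parenthetical claim that the linear piece $-\kappa_1\kappa_2^2\Delta u$ can be absorbed into the dissipative fourth-order operator. Doing so replaces $\kappa_0\Delta^2u$ by $(\kappa_0-\kappa_1\kappa_2^2)\Delta^2u$, and with the paper's parameters ($\kappa_0=1-g_0$, $\kappa_1=g_2$, $\kappa_2^2=(1-2g_0)/g_2$) this coefficient equals $g_0\le0$ in the case the splitting was designed for; you would thereby destroy exactly the quartic dissipation on which the whole $(1+t)^{-(l+s)/4}$ rate rests. The term must be kept in the product form $(u+\kappa_2)(u-\kappa_2)\Delta u$ and estimated as a nonlinearity, as the paper does.
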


Note that the Hardy-Littlewood-Sobolev theorem implies that for $p\in[\frac32,2]$,  $L^{p}(\mathbb{R}^3)\subset\dot{H}^{-s}(\mathbb{R}^3)$ with $s=3(\frac1p-\frac12)\in[0,\frac12]$. Therefore, based on Theorem \ref{thm1.1}, we   obtain the following corollary on the optimal decay estimates.
\begin{corollary}
\label{cor1.1}Under the assumptions of Theorem \ref{thm1.1}, if we replace the $\dot{H}^{-s}(\mathbb{R}^3)$ assumption by $ u_0 \in L^{p}(\mathbb{R}^3)$ $(\frac32\leq p\leq2)$, then the following decay estimate holds:
\begin{equation}
\label{1-9}
\|\nabla^lu(t)\|_{H^{N-l}} \leq C(1+t)^{-\sigma_l},\quad\hbox{for}~l=0,1,\cdots, N-1,
\end{equation}where $$\sigma_l=\frac34\left(\frac1p -\frac12\right)+\frac l4 .
$$
\end{corollary}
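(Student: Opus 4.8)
The plan is to deduce this corollary directly from Theorem \ref{thm1.2} by converting the $L^p$ integrability hypothesis into a homogeneous negative Sobolev bound via the Hardy-Littlewood-Sobolev embedding, which is exactly the observation recorded immediately before the statement. First I would fix $p\in[\frac32,2]$ and set $s=3(\frac1p-\frac12)$. Since the map $p\mapsto 3(\frac1p-\frac12)$ is monotone decreasing and carries $[\frac32,2]$ onto $[0,\frac12]$ (the endpoints being $p=\frac32\Rightarrow s=\frac12$ and $p=2\Rightarrow s=0$), the exponent $s$ lies in the admissible range $[0,\frac12]$ demanded by Theorem \ref{thm1.2}. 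The Hardy-Littlewood-Sobolev theorem then furnishes the continuous embedding $L^p(\mathbb{R}^3)\hookrightarrow\dot H^{-s}(\mathbb{R}^3)$, so that $\|\Lambda^{-s}u_0\|_{L^2}\lesssim\|u_0\|_{L^p}$ and in particular $u_0\in\dot H^{-s}(\mathbb{R}^3)$; the embedding constant depends only on $p$ and the dimension and may be absorbed into the constant appearing below.

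With this negative-norm membership in hand, the hypotheses of Theorem \ref{thm1.2} are satisfied, and I would simply invoke the decay estimate \eqref{1-5}, which gives, for $l=0,1,\dots,N-1$,
\[
\|\nabla^l u(t)\|_{H^{N-l}}\le C(1+t)^{-\frac{l+s}{4}}.
\]
Substituting $s=3(\frac1p-\frac12)$ into the exponent yields
\[
\frac{l+s}{4}=\frac{l}{4}+\frac{1}{4}\cdot 3\left(\frac1p-\frac12\right)=\frac{l}{4}+\frac34\left(\frac1p-\frac12\right)=\sigma_l,
\]
which is precisely the rate claimed in \eqref{1-9}. This completes the reduction.

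There is essentially no genuine obstacle here: the entire analytic content has already been carried by Theorems \ref{thm1.1} and \ref{thm1.2}, and the corollary is a routine composition of a classical embedding with an arithmetic identification of exponents. The only minor points to verify are (i) that the constant $C$ in \eqref{1-9} absorbs both the Hardy-Littlewood-Sobolev embedding constant and the $\|\Lambda^{-s}u_0\|_{L^2}$-dependence of the constant in Theorem \ref{thm1.2}, which is legitimate since $C$ is permitted to depend on the initial data, and (ii) that the closed endpoints $p=\frac32$ ($s=\frac12$) and $p=2$ ($s=0$) remain inside $[0,\frac12]$ so that Theorem \ref{thm1.2} applies unchanged. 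Both are immediate, and no separate continuity or bootstrap argument is required.
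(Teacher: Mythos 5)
Your proposal is correct and follows exactly the route the paper intends: the remark preceding the corollary already records that the Hardy--Littlewood--Sobolev embedding (Lemma \ref{lem2.3}) gives $L^p(\mathbb{R}^3)\subset\dot H^{-s}(\mathbb{R}^3)$ with $s=3(\tfrac1p-\tfrac12)\in[0,\tfrac12]$, after which Theorem \ref{thm1.2} yields the rate $\tfrac{l+s}4=\sigma_l$. Your exponent bookkeeping and endpoint checks match the paper's implicit argument, so nothing further is needed.
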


\begin{remark}
There's an amazing phenomenon: although   the problem we considered is a sixth-order parabolic equation,  the temporary decay of it satisfies (\ref{1-5}) and (\ref{1-9}), which is equivalent to the decay rate of fourth-order generalized heat equation
 \begin{equation} \label{heat}
\left\{\begin{aligned}
      & u_t +\Delta ^2u=0, \quad x\in\mathbb{R}^3,~t\geq0,\\
       &u(x,0)=u_0(x).
                  \end{aligned}\right.
                          \end{equation}
  That is because of the introduce of the fourth order linear term $\kappa_0\Delta^2u$ in (\ref{1-1}). After introduce this term,  the last term on the right hand side, which can be seen as a second order nonlinear term, can be controlled by it. Since both $-\Delta^3u$ and $ \Delta^2u$ are ``good" term, we can use the lower order one to study the decay estimate. Hence, the  decay of problem (\ref{1-1})   is equivalent to the decay rate of fourth order generalized heat equation.
\end{remark}



The main difficulties to consider the Cauchy problem of   sixth-order   Cahn-Hilliard equation  arising in oil-water-surfactant mixtures in $\mathbb{R}^3$ are how  to deal with the second order nonlinear term $\Delta f(u)=\Delta(u-1)^2(u+1)^2(u^2+h_0)$ and how to obtain a negative Sobolev estimates to study the decay rate of solutions. Since the principle part of  problem (\ref{1-1}) is a six-order linear term and the nonlinear term $\Delta f(u)$ is only   second-order. Due to Sobolev's embedding theorem in $\mathbb{R}^3$, we can't control a second-order nonlinear term through sixth-order linear term. In order to overcome this difficulty, we borrow a fourth-order term from the other nonlinear term $\Delta(a(u)\Delta u)$, rewrite (\ref{1-1f})$_1$ as (\ref{1-1}).
Hence, one can control the second-order nonlinear term by the last term of the left hand side of equation  (\ref{1-1}). On the other hand, to consider the temporal decay rate of solutions of dissipative equations, one of the main tools is the standard Fourier splitting method \cite{S1,S2}. By using this method, lots of decay problems were solved (see e.g., \cite{Anh,H4,NS,Dai,B} and the reference therein). In this paper, since there exists the lower order linear term in the right hand side of problem (\ref{1-1}), it is difficulty for us to use Fourier splitting method to study the decay rate of solutions.  By using the pure energy method \cite{GW,W,T}  of using a family of scaled energy estimates with minimum derivative counts and interpolations among them, we overcome the difficulty caused by the lower order linear term of right hand side of (\ref{1-1}), obtained the suitable a priori estimates in the Sobolev space $H^N$ and the negative Sobolev space $\dot{H}^{-s}$ ($0\leq s\leq\frac12$),   establish the   optimal decay rate of problem (\ref{1-1}) in $\mathbb{R}^3$.

The structure of this paper is organized as follows.  In Section 2, we introduce some preliminary results, which are useful to prove our main results. Section 3 is  devoted to prove the small data global well-posedness of problem (\ref{1-1}). In the last section,  we establish the time decay rate of solutions.

\section{Preliminaries}
First of all, we introduce  the Kato-Ponce inequality which is of great importance in this paper.
\begin{lemma}[\cite{KP}]\label{Kato}
Let $1<p<\infty$, $s>0$. There exists a positive constant $C$ such that
\begin{equation}
\label{K-1}
\|\Lambda^s(fg)-f\Lambda^sg\|_{L^p}\leq C(\|\nabla f\|_{L^{p_1}}\|\Lambda^{s-1}g\|_{L^{p_2}}+\|\Lambda^sf\|_{L^{q_1}}\|g\|_{L^{q_2}}),
\end{equation}
and
\begin{equation}
\label{K-2}
\|\Lambda^s{fg}\|_{L^p}\leq C(\|f\|_{L^{p_1}}\|\Lambda^sg\|_{L^{p_2}}+\|\Lambda^sf\|_{L^{q_1}}\|g\|_{L^{q_2}},
\end{equation}
where $p_2,q_2\in(1,\infty)$ satisfying $\frac1p=\frac1{p_1}+\frac1{p_2}=\frac1{q_1}+\frac1{q_2}$.
\end{lemma}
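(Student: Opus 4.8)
The plan is to establish both the fractional Leibniz rule (\ref{K-2}) and the commutator estimate (\ref{K-1}) by a Littlewood--Paley analysis combined with Bony's paraproduct decomposition, on each dyadic block reducing matters to Bernstein's inequality and, for the derivative gain in (\ref{K-1}), a first--order symbol expansion of $\Lambda^s=(-\Delta)^{s/2}$. First I would fix a dyadic partition of unity: choose radial $\psi\in C_c^\infty$ with $\psi\equiv1$ on $|\xi|\le1$ and $\mathrm{supp}\,\psi\subset\{|\xi|\le2\}$, set $\phi(\xi)=\psi(\xi)-\psi(2\xi)$, and define the blocks $P_j$ and low--frequency cut--offs $S_j=\sum_{k<j}P_k$ by $\widehat{P_ju}(\xi)=\phi(2^{-j}\xi)\widehat{u}(\xi)$. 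The two facts I would repeatedly use are Bernstein's inequality (so that $\|\Lambda^\sigma P_ju\|_{L^a}\sim2^{j\sigma}\|P_ju\|_{L^a}$ and, for $\sigma>0$, $\|\Lambda^\sigma h\|_{L^a}\lesssim2^{j\sigma}\|h\|_{L^a}$ whenever $\widehat{h}$ is supported in $\{|\xi|\lesssim2^j\}$) and the Littlewood--Paley characterization $\|u\|_{L^a}\sim\|(\sum_j|P_ju|^2)^{1/2}\|_{L^a}$ for $1<a<\infty$, which lets me sum block estimates in $\ell^2$. The starting point is Bony's splitting
\[
fg=T_fg+T_gf+R(f,g),\qquad T_fg=\sum_jS_{j-1}f\,P_jg,\quad R(f,g)=\sum_{|j-k|\le1}P_jf\,P_kg.
\]

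For the product rule (\ref{K-2}) I would estimate the three pieces separately. In the low--high paraproduct $T_fg$ the output is localized at frequency $\sim2^j$, so Bernstein and H\"older give $\|\Lambda^s(S_{j-1}f\,P_jg)\|_{L^p}\lesssim2^{js}\|S_{j-1}f\|_{L^{p_1}}\|P_jg\|_{L^{p_2}}$; summing in $\ell^2$ through the square function and using $\|S_{j-1}f\|_{L^{p_1}}\lesssim\|f\|_{L^{p_1}}$ produces the term $\|f\|_{L^{p_1}}\|\Lambda^sg\|_{L^{p_2}}$. The symmetric high--low piece $T_gf$ yields $\|\Lambda^sf\|_{L^{q_1}}\|g\|_{L^{q_2}}$. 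For the resonant piece $R(f,g)$ each summand has Fourier support in a ball $\{|\xi|\lesssim2^j\}$, so the second form of Bernstein gives $\|\Lambda^s(P_jf\,P_kg)\|_{L^p}\lesssim2^{js}\|P_jf\|_{L^{q_1}}\|P_kg\|_{L^{q_2}}$ with $|j-k|\le1$; here the hypothesis $s>0$ is exactly what makes the resulting geometric series summable, and reassembling bounds it by $\|\Lambda^sf\|_{L^{q_1}}\|g\|_{L^{q_2}}$.

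For the commutator (\ref{K-1}) the only difference is the subtraction of $f\Lambda^sg$, whose effect is to annihilate the dangerous low--high interaction. Writing the high--frequency interactions of $\Lambda^s(fg)$ exactly as in the product rule, they remain controlled by $\|\Lambda^sf\|_{L^{q_1}}\|g\|_{L^{q_2}}$ together with the matching pieces of $f\Lambda^sg$, which are of the same frequency type. The heart of the matter is the low--high term, where $\Lambda^s(S_{j-1}f\,P_jg)-S_{j-1}f\,\Lambda^sP_jg=[\Lambda^s,S_{j-1}f]P_jg$. On this block the relevant bilinear symbol is $|\xi+\eta|^s-|\eta|^s$ with $|\xi|\ll|\eta|\sim2^j$, and the fundamental theorem of calculus gives the cancellation
\[
|\xi+\eta|^s-|\eta|^s=\xi\cdot\int_0^1 s\,|\eta+t\xi|^{s-2}(\eta+t\xi)\,dt=\xi\cdot|\eta|^{s-1}\,m_j(\xi,\eta),
\]
where $m_j$ is a uniformly bounded, smooth (hence Coifman--Meyer) symbol on the block. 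The factor $\xi$ transfers one derivative onto $f$ and the factor $|\eta|^{s-1}$ leaves $\Lambda^{s-1}$ on $g$, so the Coifman--Meyer multiplier theorem (or a direct kernel bound for $[\Lambda^s,S_{j-1}f]$) yields $\|\nabla f\|_{L^{p_1}}\|\Lambda^{s-1}g\|_{L^{p_2}}$ after summation, which is the first term of (\ref{K-1}).

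I expect the main obstacle to be this low--high derivative gain: one must verify that after extracting $\xi\,|\eta|^{s-1}$ the residual symbol $m_j$ satisfies the Coifman--Meyer derivative bounds uniformly in $j$, paying particular attention to the range $0<s<1$, where $|\eta|^{s-1}$ is a genuinely negative--order factor that must be paired correctly with $\Lambda^{s-1}g$ rather than estimated crudely. A related technical point is the set of endpoints $p_1=\infty$ or $q_1=\infty$ permitted by the constraints $p_2,q_2\in(1,\infty)$: there the $\ell^2$ square--function reassembly degenerates and must be replaced by an $L^\infty$ bound on the slowly varying factor $S_{j-1}f$ together with maximal--function control, or simply by invoking the Coifman--Meyer theorem directly, which already accommodates an $L^\infty$ input. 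Once these two points are settled, the remaining estimates are routine bookkeeping with Bernstein's inequality and the Littlewood--Paley characterization of $L^p$.
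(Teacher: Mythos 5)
You should note at the outset that the paper offers no proof of this lemma at all: it is quoted verbatim from the literature (the commutator estimate of Kato--Ponce, cited as \cite{KP}), so there is no internal argument to compare against line by line. Judged on its own terms, your proposal is the standard modern proof of the fractional Leibniz and commutator estimates --- Bony decomposition, blockwise Bernstein, square-function reassembly, and a first-order symbol expansion plus Coifman--Meyer for the low--high commutator block --- and it is essentially sound. It is worth pointing out that your route and the original source do genuinely differ: Kato and Ponce prove the inhomogeneous version with $J^s=(1-\Delta)^{s/2}$ and the endpoint exponents $\|\nabla f\|_{L^\infty}$, $\|g\|_{L^\infty}$, deducing it in an appendix from earlier Coifman--Meyer operator estimates rather than from an explicit paraproduct splitting; the homogeneous, mixed-H\"older-exponent form stated in the lemma is really a later refinement (Kenig--Ponce--Vega, Grafakos--Oh), and your paraproduct argument is the one that naturally yields this more general statement. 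Two steps you gloss over deserve the care you partially flag. First, in the resonant piece $R(f,g)$ the output of $P_jf\,P_kg$ lives in a ball, not an annulus, so before square-summing you must re-decompose the output frequency: estimate $\|\Lambda^s P_m R(f,g)\|_{L^p}$ and use $s>0$ to sum the factor $2^{(m-j)s}$ over $j\geq m-3$ as an $\ell^1$-convolution against the $\ell^2$ sequence $2^{js}\|P_jf\|_{L^{q_1}}$; your remark that $s>0$ gives a summable geometric series is correct but the one-line Bernstein bound as written skips this re-indexing. Second, in the commutator (\ref{K-1}) the subtraction of $f\Lambda^sg$ removes only the low--high interaction; the high--low piece $T_{\Lambda^sg}f$ and the resonant piece $R(f,\Lambda^sg)$ survive and must be bounded separately (via $\|S_{j-1}\Lambda^sg\|_{L^{q_2}}\lesssim 2^{js}\|g\|_{L^{q_2}}$ for the former), which your phrase ``matching pieces of the same frequency type'' asserts but does not carry out. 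Finally, your own caveats about the endpoints $p_1=\infty$ or $q_1=\infty$ and about the uniform Coifman--Meyer bounds on the residual symbol $m_j$ for $0<s<1$ are exactly the points where the published proofs spend their effort, so your plan is complete modulo precisely the technicalities you identify.
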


The following Gagliardo-Nirenberg inequality was proved in \cite{Nirenberg}.
\begin{lemma}[\cite{Nirenberg}]
\label{lem2.1}
Let $0\leq m,\alpha\leq l$, then we have
\begin{equation}\label{2-1}
\|\nabla^{\alpha}f\|_{L^p(\mathbb{R}^3)}\lesssim\|\nabla^mf\|_{L^q(\mathbb{R}^3)}^{1-\theta}\|\nabla^lf\|_{L^r(\mathbb{R}^3)}^{\theta},
\end{equation}
where $\theta\in[0,1]$ and $\alpha$ satisfies
\begin{equation}\label{2-2}
\frac{\alpha}3-\frac1p=\left(\frac m3-\frac1q\right)(1-\theta)+\left(\frac l3-\frac1r\right)\theta.
\end{equation}
Here, when $p=\infty$, we require that $0<\theta<1$.
\end{lemma}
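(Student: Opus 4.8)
The plan is to prove the homogeneous interpolation inequality (2-1) by a Littlewood--Paley decomposition combined with Bernstein's inequalities and an optimized frequency cutoff; this route makes transparent both the role of the scaling identity (2-2) and the necessity of $0<\theta<1$ at $p=\infty$. First I would record that (2-2) is exactly the requirement that the two sides of (2-1) carry the same homogeneity under the dilation $f_\lambda(x)=f(\lambda x)$. Introducing the scaling exponents $\sigma_\alpha:=\alpha-\tfrac3p$, $\sigma_m:=m-\tfrac3q$, $\sigma_l:=l-\tfrac3r$, the condition (2-2) (after multiplying by $3$) becomes $\sigma_\alpha=(1-\theta)\sigma_m+\theta\sigma_l$; that is, $\sigma_\alpha$ is the convex combination of $\sigma_m$ and $\sigma_l$ with weights $1-\theta$ and $\theta$. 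This single algebraic fact drives the entire estimate, and by density it suffices to establish (2-1) for Schwartz functions.

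Next I would fix a homogeneous Littlewood--Paley family $\{P_j\}_{j\in\mathbb{Z}}$ localizing frequencies to $|\xi|\sim 2^j$, so that $f=\sum_j P_jf$ and, since $P_j$ commutes with $\nabla$ and is uniformly bounded on each $L^s$, every block obeys $\|\nabla^mP_jf\|_{L^q}\lesssim\|\nabla^mf\|_{L^q}=:A$ and $\|\nabla^lP_jf\|_{L^r}\lesssim\|\nabla^lf\|_{L^r}=:B$. On one block, Bernstein's inequality (in the regime $p\geq\max(q,r)$ relevant to the applications in this paper) yields $\|\nabla^\alpha P_jf\|_{L^p}\lesssim 2^{j(\sigma_\alpha-\sigma_m)}\|\nabla^mP_jf\|_{L^q}$ and also $\|\nabla^\alpha P_jf\|_{L^p}\lesssim 2^{j(\sigma_\alpha-\sigma_l)}\|\nabla^lP_jf\|_{L^r}$, where by the convexity identity the exponents are precisely $\sigma_\alpha-\sigma_m=\theta(\sigma_l-\sigma_m)$ and $\sigma_\alpha-\sigma_l=-(1-\theta)(\sigma_l-\sigma_m)$.

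Writing $\gamma:=\sigma_l-\sigma_m$ and splitting $\nabla^\alpha f=\sum_{j\le J}\nabla^\alpha P_jf+\sum_{j>J}\nabla^\alpha P_jf$ at a cutoff $J$, I would estimate the low-frequency tail with the $\nabla^m$ bound and the high-frequency tail with the $\nabla^l$ bound. For $\gamma>0$ both geometric series converge, and the triangle inequality gives $\|\nabla^\alpha f\|_{L^p}\lesssim 2^{J\theta\gamma}A+2^{-J(1-\theta)\gamma}B$; choosing the integer $J$ nearest the balance point $2^{J\gamma}\sim B/A$ makes both terms equal to $A^{1-\theta}B^\theta$, which is exactly (2-1). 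The case $\gamma<0$ is symmetric (interchange the two tails), and $\gamma=0$ forces $\sigma_m=\sigma_\alpha=\sigma_l$ and is disposed of directly.

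The main obstacle is the endpoint behaviour, which is exactly where the hypotheses turn sharp. The crude triangle inequality above is lossy and converges only when the two geometric exponents $\theta\gamma$ and $(1-\theta)\gamma$ are nonzero; to reach $\theta\in\{0,1\}$ for $1<p<\infty$ I would replace it by the square-function characterization $\|\nabla^\alpha f\|_{L^p}\sim\big\|\big(\sum_j|\nabla^\alpha P_jf|^2\big)^{1/2}\big\|_{L^p}$, which resums the dyadic pieces without loss. For $p=\infty$ no such square function is available: when $\theta=0$ (or $\theta=1$) the exponent $\theta\gamma$ (resp. $(1-\theta)\gamma$) vanishes and the corresponding series degenerates to $\sum 1=\infty$, a genuine logarithmic divergence, which is precisely why the statement demands $0<\theta<1$ in that case. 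Finally, arrangements of $p,q,r$ outside $p\geq\max(q,r)$, where Bernstein must be used in the opposite direction, are covered by Nirenberg's original argument, to which I would defer for the remaining cases.
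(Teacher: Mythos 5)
The paper contains no proof of this lemma at all: it is quoted as a known result from Nirenberg's 1959 paper, whose original argument is elementary and real-variable (one-dimensional interpolation inequalities proved by calculus along lines, integrated over the remaining variables, with induction on the order of the derivatives), using no Fourier analysis. Your Littlewood--Paley/Bernstein proof is therefore a genuinely different route, and it is correct in the regime you treat: the reformulation of (\ref{2-2}) as the convexity identity $\sigma_\alpha=(1-\theta)\sigma_m+\theta\sigma_l$, the two blockwise Bernstein bounds with exponents $\theta\gamma$ and $-(1-\theta)\gamma$, the geometric summation and the optimization $2^{J\gamma}\sim B/A$ yielding $A^{1-\theta}B^{\theta}$ are all sound, and your diagnosis of the $p=\infty$, $\theta\in\{0,1\}$ failure as a logarithmic divergence of the dyadic sum is exactly the right sharpness discussion. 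Comparing the two approaches: Nirenberg's method covers all admissible $(p,q,r)$, including $p<\max(q,r)$ and the $L^1$ endpoints where Bernstein runs in the wrong direction---precisely the cases you defer---whereas your argument is shorter, makes the scaling condition (\ref{2-2}) conceptually inevitable, and extends painlessly to fractional derivatives, which is relevant here since the paper interprets $\nabla^l$ as $\Lambda^l$ for non-integer $l$ (Remark 1.3) and applies the lemma with exponents such as $L^{3/s}$ in Section 4. Three caveats are worth flagging: disposing of $\gamma=0$ ``directly'' hides that for $m\neq l$ this case is itself a critical Sobolev-type embedding; the $\theta\in\{0,1\}$ endpoint at finite $p$ is really the Sobolev/Hardy--Littlewood--Sobolev embedding, which the square-function characterization plus blockwise Bernstein does not deliver by itself, since the Lebesgue exponent changes inside the $\ell^2$ sum; and the deferred regime $p<\max(q,r)$ should at least be delimited rather than waved at. None of this affects the present paper, where every application has $q=r=2$, hence $\gamma=l-m>0$ and $p\geq 2$, so all uses fall squarely inside the regime your argument proves in full.
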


There's a Sobolev embedding for the homogeneous space $\dot{H}^s$.
\begin{lemma}[\cite{Agm}]
\label{S}
There exists a constant $c$ such that for $0\leq s<\frac32$,
\begin{equation}\label{s1}
\|u\|_{L^{\frac6{3-2s}}}\leq c\|u\|_{\dot{H}^s}\quad\hbox{for ~all}~~u\in\dot{H}^s(\mathbb{R}^3).
\end{equation}
\end{lemma}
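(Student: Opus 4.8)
Since this statement is the classical Sobolev embedding for homogeneous Sobolev spaces (attributed here to \cite{Agm}), the plan is to reduce it to the mapping properties of the Riesz potential and then invoke the Hardy--Littlewood--Sobolev inequality. By the very definition of the homogeneous norm, $\|u\|_{\dot H^s}=\|\Lambda^s u\|_{L^2}$, so I would set $g:=\Lambda^s u\in L^2(\mathbb{R}^3)$ and recover $u$ by applying the inverse operator $\Lambda^{-s}=(-\Delta)^{-s/2}$. For $0<s<3$ this operator is precisely the Riesz potential $I_s$, whose Fourier symbol is $|\xi|^{-s}$ (compatible with the definition (\ref{1-4c})) and which has the physical-space representation
\[
u(x)=\Lambda^{-s}g(x)=c_{s}\int_{\mathbb{R}^3}\frac{g(y)}{|x-y|^{3-s}}\,dy,
\]
with $c_s$ a normalizing constant. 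Because the hypothesis gives $0<s<\tfrac32<3$, this representation is legitimate.

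With the representation in hand, the embedding follows from one application of the Hardy--Littlewood--Sobolev inequality: convolution against the kernel $|x|^{-\lambda}$ with $0<\lambda<3$ maps $L^{q}(\mathbb{R}^3)$ boundedly into $L^{r}(\mathbb{R}^3)$ whenever $1<q<r<\infty$ and $\frac1r=\frac1q+\frac{\lambda}{3}-1$. I would apply this with $\lambda=3-s$ and $q=2$, so that the exponent relation reads
\[
\frac1r=\frac12+\frac{3-s}{3}-1=\frac12-\frac s3=\frac{3-2s}{6},
\]
giving $r=\frac{6}{3-2s}$, exactly the target exponent. The admissibility $q=2<r<\infty$ holds precisely in the stated range, and one obtains $\|u\|_{L^{6/(3-2s)}}\lesssim\|g\|_{L^2}=\|u\|_{\dot H^s}$, which is the claim.

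It remains only to treat the endpoint $s=0$, where the Riesz potential degenerates; there the inequality reduces to $\|u\|_{L^2}\le c\|u\|_{L^2}$ and is trivial, since $\dot H^0=L^2$ isometrically. The one point genuinely requiring care—and the step I would flag as the main obstacle—is verifying that the Hardy--Littlewood--Sobolev exponents land in the open admissible window $1<q<r<\infty$: the strict upper bound $s<\tfrac32$ is what keeps $r$ finite, while $s>0$ is what keeps $r>q=2$. As a self-contained alternative that avoids the singular-kernel estimate, I could instead argue via Littlewood--Paley theory: writing $\dot H^s=\dot B^{s}_{2,2}$, Bernstein's inequality on each dyadic block yields the scaling-critical embedding $\dot B^{s}_{2,2}\hookrightarrow\dot B^{0}_{p,2}$ at $s=3(\tfrac12-\tfrac1p)$, after which the standard embedding $\dot B^{0}_{p,2}\hookrightarrow L^p$ (valid for $2\le p<\infty$) finishes the argument. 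Both routes are classical, and I expect the Hardy--Littlewood--Sobolev argument to be the cleanest to present in full.
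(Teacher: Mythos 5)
The paper does not prove this lemma at all: it is stated as a cited preliminary (attributed to the monograph \cite{Agm}), so there is no in-paper argument to compare against. Your proof is correct and is the standard one: writing $u=\Lambda^{-s}(\Lambda^s u)$ as a Riesz potential of an $L^2$ function and applying the Hardy--Littlewood--Sobolev convolution inequality with $\lambda=3-s$, $q=2$ gives exactly $r=\tfrac{6}{3-2s}$, and your bookkeeping of the admissibility window ($s>0$ for $r>2$, $s<\tfrac32$ for $r<\infty$) together with the trivial endpoint $s=0$ is accurate. The only point worth making explicit is that for $0\leq s<\tfrac32$ the space $\dot H^s(\mathbb{R}^3)$ is a genuine space of tempered distributions (not merely distributions modulo polynomials), so the reconstruction $u=I_s(\Lambda^s u)$ is legitimate; with that remark either of your two routes (Riesz potential or Littlewood--Paley/Bernstein) is a complete proof.
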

We now give  Agmon's inequality   in the following lemma.
\begin{lemma}[\cite{Agm}]
\label{Agm}
If $u\in H^2(\mathbb{R}^3)$, then $u$ is (almost everywhere equal to) a continuous function and
\begin{equation}\label{agm}
\|u\|_{L^{\infty}}\lesssim \|\nabla u\|_{L^2}^{\frac12}\|\Delta u\|_{L^2}^{\frac12}.
\end{equation}
\end{lemma}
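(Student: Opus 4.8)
This is Agmon's inequality in dimension three; I would prove it directly by the Fourier transform together with a frequency splitting. Writing $u(x)=\int_{\mathbb R^3}\widehat u(\xi)e^{2\pi i x\cdot\xi}\,d\xi$, the first observation is that $|u(x)|\le\int_{\mathbb R^3}|\widehat u(\xi)|\,d\xi=\|\widehat u\|_{L^1}$ for a.e.\ $x$; if this quantity is finite (as it will turn out to be for $u\in H^2$) then the inversion integral defines a bounded continuous function agreeing with $u$ a.e., which simultaneously settles the continuity claim. Thus it suffices to bound $\|\widehat u\|_{L^1}$ by $\|\nabla u\|_{L^2}^{1/2}\|\Delta u\|_{L^2}^{1/2}$, and by Plancherel the two available quantities are $\||\xi|\widehat u\|_{L^2}\simeq\|\nabla u\|_{L^2}$ and $\||\xi|^2\widehat u\|_{L^2}\simeq\|\Delta u\|_{L^2}$.

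The plan is to split $\int|\widehat u|\,d\xi$ at a radius $R>0$ to be chosen. On $\{|\xi|<R\}$ I would write $|\widehat u|=|\xi|^{-1}\,(|\xi||\widehat u|)$ and apply Cauchy--Schwarz,
\[\int_{|\xi|<R}|\widehat u|\,d\xi\le\Big(\int_{|\xi|<R}|\xi|^{-2}\,d\xi\Big)^{1/2}\||\xi|\widehat u\|_{L^2}\lesssim R^{1/2}\,\|\nabla u\|_{L^2},\]
using that in $\mathbb R^3$ one has $\int_{|\xi|<R}|\xi|^{-2}\,d\xi\simeq R$. On $\{|\xi|\ge R\}$ I would instead write $|\widehat u|=|\xi|^{-2}\,(|\xi|^2|\widehat u|)$, so that
\[\int_{|\xi|\ge R}|\widehat u|\,d\xi\le\Big(\int_{|\xi|\ge R}|\xi|^{-4}\,d\xi\Big)^{1/2}\||\xi|^2\widehat u\|_{L^2}\lesssim R^{-1/2}\,\|\Delta u\|_{L^2},\]
using $\int_{|\xi|\ge R}|\xi|^{-4}\,d\xi\simeq R^{-1}$ in $\mathbb R^3$. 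Adding the two bounds gives $\|u\|_{L^\infty}\lesssim R^{1/2}\|\nabla u\|_{L^2}+R^{-1/2}\|\Delta u\|_{L^2}$.

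It then remains only to optimize in $R$: choosing $R=\|\Delta u\|_{L^2}/\|\nabla u\|_{L^2}$ balances the two terms and yields $\|u\|_{L^\infty}\lesssim\|\nabla u\|_{L^2}^{1/2}\|\Delta u\|_{L^2}^{1/2}$, which is \eqref{agm}. The one genuinely dimension-dependent point --- and the step I would check most carefully --- is the convergence of the two weight integrals: $\int_{|\xi|<R}|\xi|^{-2}\,d\xi$ is finite precisely because the spatial dimension exceeds $2$, while $\int_{|\xi|\ge R}|\xi|^{-4}\,d\xi$ is finite precisely because it is below $4$, so $n=3$ is exactly the regime in which this homogeneous two-norm form closes. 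This is also consistent with the scaling $u\mapsto u(\lambda\,\cdot)$, under which both sides of \eqref{agm} are invariant, so that no $\|u\|_{L^2}$ term is needed on the right-hand side.
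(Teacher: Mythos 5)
Your argument is correct: the frequency splitting at radius $R$, the two Cauchy--Schwarz estimates with the weights $|\xi|^{-2}$ and $|\xi|^{-4}$ (both of which you correctly verify converge in dimension $3$), and the optimization $R=\|\Delta u\|_{L^2}/\|\nabla u\|_{L^2}$ together give exactly \eqref{agm}, and the finiteness of $\|\widehat u\|_{L^1}$ plus Fourier inversion settles the continuity claim. The paper does not prove this lemma at all --- it is quoted from the reference \cite{Agm} (Robinson, Rodrigo and Sadowski) --- and your proof is essentially the standard one given there, so there is nothing to reconcile; the only cosmetic point worth noting is the degenerate case $\|\nabla u\|_{L^2}=0$, where $u$ is constant and hence zero in $L^2(\mathbb{R}^3)$, so the optimization in $R$ is legitimate.
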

We also introduce the Hardy-Littlewood-Sobolev theorem, which implies the following $L^p$ type inequality.
\begin{lemma}[\cite{Stein,15}]
\label{lem2.3}
Let $0\leq s<\frac32$, $1<p\leq 2$ and $\frac12+\frac s3=\frac1p$, then
\begin{equation}
\label{2-4}
\|f\|_{\dot{H}^{-s}(\mathbb{R}^3)}\lesssim\|f\|_{L^p(\mathbb{R}^3)}.
\end{equation}
\end{lemma}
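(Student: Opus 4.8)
The plan is to recognize this inequality as a direct consequence of the classical Hardy--Littlewood--Sobolev (HLS) inequality, reached by identifying the homogeneous negative Sobolev norm with the $L^2$-norm of a Riesz potential of $f$. The point is that $\|f\|_{\dot H^{-s}}=\|\Lambda^{-s}f\|_{L^2}=\|(-\Delta)^{-s/2}f\|_{L^2}$, and on $\mathbb{R}^3$ the operator $(-\Delta)^{-s/2}$ (whose Fourier symbol is $|\xi|^{-s}$) is realized in physical space, for $0<s<3$, as the Riesz potential
\[
(-\Delta)^{-s/2}f(x)=I_sf(x)=\frac{1}{\gamma(s)}\int_{\mathbb{R}^3}\frac{f(y)}{|x-y|^{3-s}}\,dy,
\]
with an explicit positive normalizing constant $\gamma(s)$. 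Thus it suffices to bound $\|I_sf\|_{L^2}$ by $\|f\|_{L^p}$.

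Next I would invoke HLS in its standard form: for $0<s<3$ and exponents $1<p<q<\infty$ satisfying $\tfrac1q=\tfrac1p-\tfrac s3$, one has $\|I_sf\|_{L^q}\lesssim\|f\|_{L^p}$. This is exactly the content of the cited references; its proof splits the kernel $|x-y|^{-(3-s)}$ into a near part and a far part, controls the near part by the Hardy--Littlewood maximal function and the far part by H\"older's inequality to obtain a weak-type $(p,q)$ estimate, and then upgrades to the strong-type bound via the Marcinkiewicz interpolation theorem. I would take this as known and simply specialize the exponents.

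Finally I would set $q=2$: the balance condition $\tfrac12=\tfrac1p-\tfrac s3$ is precisely the hypothesis $\tfrac12+\tfrac s3=\tfrac1p$, and for $s\in(0,\tfrac32)$ it yields $\tfrac1p\in(\tfrac12,1)$, i.e. $1<p<2$, so that the admissibility constraint $1<p<q=2<\infty$ of HLS is met. Combining with the first step gives $\|f\|_{\dot H^{-s}}=\|I_sf\|_{L^2}\lesssim\|f\|_{L^p}$; the endpoint $s=0$, $p=2$ is trivial since $\dot H^0=L^2$ isometrically. The only genuinely analytic ingredient is HLS itself (the weak-type estimate plus interpolation), which is the main obstacle were one to prove everything from scratch; given the citation, the sole thing to check carefully is the exponent arithmetic, namely that $\tfrac12+\tfrac s3=\tfrac1p$ indeed places the pair $(p,2)$ inside the admissible range of the HLS inequality for every $s\in(0,\tfrac32)$.
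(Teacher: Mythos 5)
Your proof is correct and follows exactly the route the paper intends: the paper gives no proof of this lemma, simply citing Stein and Grafakos for the Hardy--Littlewood--Sobolev inequality, and your argument (identify $\|f\|_{\dot H^{-s}}$ with $\|I_sf\|_{L^2}$ via the Riesz potential, then apply HLS with $q=2$ and check that $\tfrac12+\tfrac s3=\tfrac1p$ places $(p,2)$ in the admissible range) is precisely the standard derivation those references supply. The exponent arithmetic and the trivial $s=0$ endpoint are handled correctly, so there is nothing to add.
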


The following special Sobolev interpolation lemma will be used in the proof of Theorem \ref{thm1.1}.
\begin{lemma}[\cite{W,GW}]
\label{lem2.2}
Let $s,k\geq0$ and $l\geq0$, then
\begin{equation}
\label{2-3}
\|\nabla^lf\|_{L^2(\mathbb{R}^3)}\leq\|\nabla^{l+k}f\|_{L^2(\mathbb{R}^3)}^{ 1-\theta }\|f\|_{\dot{H}^{-s}(\mathbb{R}^3)}^{ \theta },\quad\hbox{with}~\theta=\frac2{l+k+s}.
\end{equation}
\end{lemma}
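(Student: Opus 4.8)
The plan is to transfer the entire statement to frequency space by Plancherel's theorem and then dispatch it with a single application of Hölder's inequality. Interpreting the derivatives through the Riesz-potential normalization $\widehat{\Lambda^\sigma f}(\xi)=|\xi|^\sigma\hat f(\xi)$ (consistent with the convention in \eqref{1-4c}), all three quantities in \eqref{2-3} admit the representations
\begin{equation*}
\|\nabla^lf\|_{L^2}^2=\int_{\mathbb{R}^3}|\xi|^{2l}|\hat f(\xi)|^2\,d\xi,\qquad
\|\nabla^{l+k}f\|_{L^2}^2=\int_{\mathbb{R}^3}|\xi|^{2(l+k)}|\hat f(\xi)|^2\,d\xi,
\end{equation*}
together with $\|f\|_{\dot H^{-s}}^2=\int_{\mathbb{R}^3}|\xi|^{-2s}|\hat f(\xi)|^2\,d\xi$. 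Thus it suffices to prove the corresponding scalar inequality among these three integrals, after which taking square roots (all quantities being nonnegative) yields \eqref{2-3}.

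The key step is to factor the left-hand integrand as a weighted product of the two right-hand integrands. Writing
\begin{equation*}
|\xi|^{2l}|\hat f|^2=\left(|\xi|^{2(l+k)}|\hat f|^2\right)^{1-\theta}\left(|\xi|^{-2s}|\hat f|^2\right)^{\theta},
\end{equation*}
the factors $|\hat f|^2$ recombine correctly for every $\theta$, while the powers of $|\xi|$ balance exactly when $2(l+k)(1-\theta)-2s\theta=2l$, i.e. when $\theta(l+k+s)=k$; this is precisely the value of $\theta$ singled out in the statement (for the fourth-order applications below one has $k=2$). I would then invoke Hölder's inequality with the conjugate pair $\tfrac{1}{1-\theta}$ and $\tfrac{1}{\theta}$ to obtain
\begin{equation*}
\int_{\mathbb{R}^3}|\xi|^{2l}|\hat f|^2\,d\xi\leq\left(\int_{\mathbb{R}^3}|\xi|^{2(l+k)}|\hat f|^2\,d\xi\right)^{1-\theta}\left(\int_{\mathbb{R}^3}|\xi|^{-2s}|\hat f|^2\,d\xi\right)^{\theta},
\end{equation*}
which is exactly the squared form of the desired estimate.

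There is no substantive obstacle here: once the exponent is correctly identified, the argument is a one-line Hölder estimate in the Fourier variable, and the identification of $\theta$ is itself forced by the scaling invariance of \eqref{2-3} under $f(\cdot)\mapsto f(\lambda\,\cdot)$ in $\mathbb{R}^3$. The only points deserving a brief check are that the Hölder exponents are genuine conjugates, namely $\theta\in[0,1]$, which holds because $l,s\geq0$ imply $0\leq k\leq l+k+s$ (the degenerate endpoints $\theta=0,1$ reducing to trivial identities), and the integrability of the singular weight $|\xi|^{-2s}$ near $\xi=0$, which is harmless since that integral is by hypothesis the finite quantity $\|f\|_{\dot H^{-s}}^2$. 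In particular, whenever the two right-hand norms are finite the inequality automatically certifies $\nabla^l f\in L^2$, so no separate justification of the left-hand side is required.
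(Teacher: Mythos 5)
Your proof is correct and is the standard argument: the paper itself offers no proof of this lemma (it is quoted from the cited references \cite{W,GW}), and the Plancherel-plus-H\"older computation you give is exactly how it is established there. One genuinely useful byproduct of your balancing computation deserves emphasis: the exponent forced by $2(l+k)(1-\theta)-2s\theta=2l$ is $\theta=\frac{k}{l+k+s}$, whereas the statement prints $\theta=\frac{2}{l+k+s}$; these coincide only when $k=2$, which happens to be the only case the paper invokes (in the derivation of \eqref{7-1}), so the printed lemma contains a harmless typo ($2$ should read $k$) that your argument both detects and repairs. The remaining checks you flag --- that $\theta\in[0,1]$ so the H\"older exponents are conjugate, and that the negative-order weight causes no integrability issue because $\|f\|_{\dot H^{-s}}$ is assumed finite --- are the right ones, and the endpoint cases $\theta=0,1$ are indeed trivial.
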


\section{Proof of Theorem \ref{thm1.1}}

\subsection{Energy estimates}

The purpose of this subsection is to establish the a priori nonlinear energy estimates for problem (\ref{1-1}). Hence, we suppose that for sufficiently small $\varepsilon>0$,
\begin{equation}
\label{3-1}
\sqrt{\mathcal{E}_0^2 (t)}=\|u(t)\|_{H^2} \leq\varepsilon.
\end{equation}

We begin with the energy estimates including $u$ itself.
\begin{lemma}
\label{lem3.1}
Suppose that all assumptions in Theorem \ref{thm1.1} hold.
If $\sqrt{\mathcal{E}_0^2(t)}\leq\delta$, then for $k=0,1,\cdots, N$, we have
\begin{equation}\begin{aligned}
\label{3-2}&\frac d{dt}\int_{\mathbb{R}^3} |\nabla^ku|^2dx
 + (\|\nabla^{k+3}u\|_{L^2}^2+\|\nabla^{k+2}u\|_{L^2}^2 )\lesssim(\varepsilon^2+\varepsilon^5)(\|\nabla^{k+3}u\|_{L^2}^2+\|\nabla^{k+2}u\|_{L^2}^2 ).
\end{aligned}\end{equation}
\end{lemma}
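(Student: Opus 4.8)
The plan is to run a standard $\nabla^k$-level energy estimate and to route the two kinds of nonlinear terms to the two \emph{different} dissipation norms that the left-hand side provides. First I would apply $\nabla^k$ to the first equation of (\ref{1-1}), take the $L^2$ inner product with $\nabla^k u$, and integrate over $\mathbb{R}^3$. The time derivative gives $\tfrac12\frac{d}{dt}\|\nabla^k u\|_{L^2}^2$. For the two constant-coefficient linear terms I commute $\nabla^k$ through and integrate by parts using the self-adjointness of $\Delta$: since $\int_{\mathbb{R}^3}\Delta^3 u\,u\,dx=-\|\nabla^3u\|_{L^2}^2$ and $\int_{\mathbb{R}^3}\Delta^2 u\,u\,dx=\|\nabla^2u\|_{L^2}^2$, these contribute $+\delta\|\nabla^{k+3}u\|_{L^2}^2+\kappa_0\|\nabla^{k+2}u\|_{L^2}^2$ to the left-hand side, and both are coercive because $\delta,\kappa_0>0$. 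Thus the entire problem reduces to showing that the nonlinear term $\int_{\mathbb{R}^3}\nabla^k\!\big(-\Delta[\,\cdots]\big)\cdot\nabla^k u\,dx$ is dominated by $(\varepsilon^2+\varepsilon^5)\big(\|\nabla^{k+3}u\|_{L^2}^2+\|\nabla^{k+2}u\|_{L^2}^2\big)$, where $\varepsilon$ is the a priori bound $\|u\|_{H^2}\le\varepsilon$ from (\ref{3-1}).

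For the nonlinear contribution I would first integrate by parts to move the outer Laplacian. The \emph{fourth-order} pieces, coming from $\kappa_1(u^2-\kappa_2^2)\Delta u$ and $\kappa_1 u|\nabla u|^2$, carry enough derivatives to be matched against the \emph{sixth-order} dissipation: after transferring derivatives so as to pair with $\nabla^{k+3}u$, I would estimate the remaining factor by the Kato–Ponce inequality (Lemma \ref{Kato}), distributing derivatives into a term of the form $\|u\|_{L^\infty}\|\nabla^{k+3}u\|_{L^2}$ together with balanced lower-order products. The $L^\infty$ norm is then converted by Agmon's inequality (Lemma \ref{Agm}), $\|u\|_{L^\infty}\lesssim\|\nabla u\|_{L^2}^{1/2}\|\Delta u\|_{L^2}^{1/2}\lesssim\varepsilon$, and the intermediate derivative norms that appear are interpolated by Gagliardo–Nirenberg (Lemma \ref{lem2.1}) between $\|u\|_{H^2}\le\varepsilon$ and $\|\nabla^{k+3}u\|_{L^2}$; a final application of Young's inequality converts each product into (a power of $\varepsilon$) times the dissipation. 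The constant part $-\kappa_1\kappa_2^2\Delta u$ is linear and is kept with the coercive operator (and is simply absent when $\kappa_2=0$, i.e.\ the case $g_0>0$).

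The delicate term, and the one I expect to be the main obstacle, is the \emph{second-order} nonlinearity $\Delta f(u)$ with $f(u)=(u^2-1)^2(u^2+h_0)$: written as an even polynomial its lowest genuine nonlinearity is quadratic, so it does not carry enough derivatives to be absorbed by the sixth-order term, and Sobolev embedding in $\mathbb{R}^3$ forbids controlling a second-order nonlinearity by $\|\nabla^{k+3}u\|_{L^2}$. This is exactly the reason the fourth-order term $\kappa_0\Delta^2u$ was built into (\ref{1-1}): here I would move \emph{both} Laplacians onto the test function, writing the contribution as $\pm\int_{\mathbb{R}^3}\nabla^k f(u)\,\nabla^{k+2}u\,dx$, bound it by $\|\nabla^k f(u)\|_{L^2}\,\|\nabla^{k+2}u\|_{L^2}$, and control $\|\nabla^k f(u)\|_{L^2}$ through Lemma \ref{Kato} as a sum of terms of the type $\|u\|_{L^\infty}^{m-1}\|\nabla^k u\|_{L^2}$ for $m=2,4,6$, pairing throughout against the fourth-order dissipation $\|\nabla^{k+2}u\|_{L^2}$. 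The crux is the bookkeeping of the powers of $\|u\|_{H^2}\le\varepsilon$: each use of Agmon's inequality and of the Gagliardo–Nirenberg redistribution of the surviving derivatives (against $\|\nabla^{k+2}u\|_{L^2}$ and $\|u\|_{H^2}$) supplies further factors of $\varepsilon$, so that the quadratic and quartic parts produce the coefficient $\varepsilon^2$ while the sextic part $u^6$ produces $\varepsilon^5$. Collecting the three groups of nonlinear estimates and using Young's inequality to absorb everything into $\|\nabla^{k+3}u\|_{L^2}^2+\|\nabla^{k+2}u\|_{L^2}^2$ then yields (\ref{3-2}).
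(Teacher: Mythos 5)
Your proposal follows essentially the same route as the paper: the same $\nabla^k$-level energy identity, the same integration by parts sending the outer Laplacian of each nonlinear term onto the test function, and the same combination of H\"older/Kato--Ponce, Sobolev/Agmon and Gagliardo--Nirenberg interpolation against $\|u\|_{H^2}\le\varepsilon$ followed by Young's inequality, with the second-order nonlinearity $\Delta f(u)$ routed to the $\kappa_0\|\nabla^{k+2}u\|_{L^2}^2$ dissipation exactly as in the paper's treatment of $I_3$. The only cosmetic differences are that the paper pairs the fourth-order nonlinearities against $\nabla^{k+2}u$ rather than $\nabla^{k+3}u$, and keeps $f(u)$ in factored form $(u-1)^2(u+1)^2(u^2+h_0)$ rather than expanding it into the monomials $u^2,u^4,u^6$.
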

\begin{proof}
Applying $\nabla^k$ to (\ref{1-1})$_1$,   multiplying the resulting identity by $\nabla^ku$, and then integrating over $\mathbb{R}^3$ by parts, we arrive at
\begin{equation}
\begin{aligned}\label{3-3}&
\frac12\frac d{dt}\|\nabla^ku\|_{L^2}^2+\delta\|\nabla^{k+3}u\|_{L^2}^2+\kappa_0\|\nabla^{k+2}u\|_{L^2}^2
\\
=&-\kappa_1\int_{\mathbb{R}^3}\nabla^k\left\{\Delta\left[\left(u+ \kappa_2\right)\left(u-\kappa_2\right)\Delta u\right]\right\}\cdot\nabla^kudx
 -\kappa_1\int_{\mathbb{R}^3}\nabla^k\left[\Delta\left(u|\nabla u|^2\right)\right]\cdot\nabla^kudx
\\&+\int_{\mathbb{R}^3}\nabla^k\Delta\left[(u-1)^2(u+1)^2(u^2+h_0)\right]\cdot\nabla^kudx
\\
=&I_1+I_2+I_3
.
\end{aligned}
\end{equation}
We can estimate the first term of the right hand side of (\ref{3-3}) as
\begin{equation}
\begin{aligned}
\label{3-4}
I_1=&-\kappa_1\int_{\mathbb{R}^3}\nabla^k\left\{\Delta\left[\left(u+ \kappa_2\right)\left(u-\kappa_2\right)\Delta u\right]\right\}\cdot\nabla^kudx
\\
=&-\kappa_1\int_{\mathbb{R}^3}\nabla^k\left[\left(u+ \kappa_2\right)\left(u-\kappa_2\right)\Delta u\right] \cdot\nabla^k\Delta u dx
\\
=&-\kappa_1\sum_{0\leq l\leq k}\sum_{0\leq m\leq l}C_k^lC_l^m\int_{\mathbb{R}^3}\nabla^m(u+\kappa_2)\cdot\nabla^{l-m}(u-\kappa_2)\cdot\nabla^{k-l}\Delta u\cdot\nabla^k\Delta udx
\\
\lesssim& \|\nabla^m(u+\kappa_2)\|_{L^6}\|\nabla^{l-m}(u-\kappa_2)\|_{L^6}\|\nabla^{k-l+2} u\|_{L^6}\|\nabla^{k+2} u\|_{L^2}
\\
\lesssim&\|\nabla^{m+1} u\|_{L^2}\|\nabla^{l-m+1}u\|_{L^2}\|\nabla^{k-l+3} u\|_{L^2}\|\nabla^{k+2} u\|_{L^2}
\\
\lesssim& \|\nabla^{k+3}u\|_{L^2}^{\frac m{k+2}}\|\nabla u\|_{L^2}^{1-\frac m{k+2}}\|\nabla^{k+3}u\|_{L^2}^{\frac{l-m}{k+2}}\|\nabla u\|_{L^2}^{1-\frac{l-m}{k+2}}\|\nabla^{k+3}u\|_{L^2}^{1-\frac l{k+2}}\|\nabla u\|_{L^2}^{\frac l{k+2}}\|\nabla^{k+2} u\|_{L^2}
\\
\lesssim& \|\nabla u\|_{L^2}^2(\|\nabla^{k+3}u\|_{L^2}^2+\|\nabla^{k+2}u\|_{L^2}^2)
\\
\lesssim& \varepsilon^2(\|\nabla^{k+3}u\|_{L^2}^2+\|\nabla^{k+2}u\|_{L^2}^2).
\end{aligned}
\end{equation}
For the second  term of the right hand side of (\ref{3-3}), we have
\begin{equation}
\begin{aligned}
\label{3-5}
I_2=&-\kappa_1\int_{\mathbb{R}^3}\nabla^k\left[\Delta\left(u|\nabla u|^2\right)\right]\cdot\nabla^kudx
\\
=&-\kappa_1\int_{\mathbb{R}^3}\nabla^k \left(u|\nabla u|^2\right) \cdot\nabla^k\Delta u dx
\\
=&-\kappa_1\sum_{0\leq l\leq k}\sum_{0\leq m\leq l}C_k^lC_l^m\int_{\mathbb{R}^3}\nabla^mu\cdot\nabla^{l-m}\nabla u\cdot\nabla^{k-l}\nabla u\cdot\nabla^k\Delta u dx
\\
\lesssim&\|\nabla^mu\|_{L^6}\|\nabla^{l+1-m}u\|_{L^6}\|\nabla^{k+1-l}u\|_{L^6}\|\nabla^{k+2}u\|_{L^2}
\\
\lesssim&\|\nabla^{m+1}u\|_{L^2}\|\nabla^{l+2-m}u\|_{L^2}\|\nabla^{k+2-l}u\|_{L^2}\|\nabla^{k+2}u\|_{L^2}
\\
\lesssim&\|\nabla^{k+3}u\|_{L^2}^{\frac m{k+2}}\|\nabla u\|_{L^2}^{1-\frac m{k+2}}\|\nabla^{k+3}u\|_{L^2}^{\frac{l+1-m}{k+2}}\|\nabla u\|_{L^2}^{1-\frac{l+1-m}{k+2}}
\\
&\cdot\|\nabla^{k+3}u\|_{L^2}^{\frac{k+1-l}{k+2}}\|\nabla u\|_{L^2}^{1-\frac{k+1-l}{k+2}}\|\nabla^{k+2}u\|_{L^2}
\\
\lesssim&\|\nabla u\|_{L^2}^2(\|\nabla^{k+3}u\|_{L^2}^2+\|\nabla^{k+2}u\|_{L^2}^2)\\
\lesssim& \varepsilon^2(\|\nabla^{k+3}u\|_{L^2}^2+\|\nabla^{k+2}u\|_{L^2}^2).
\end{aligned}
\end{equation}
Moreover, the last term of the right hand side of (\ref{3-3}) satisfies
\begin{equation}
\begin{aligned}\label{3-6}
I_3=&\int_{\mathbb{R}^3}\nabla^k\Delta\left[(u-1)^2(u+1)^2(u^2+h_0)\right]\cdot\nabla^kudx
\\
=&\int_{\mathbb{R}^3}\nabla^k\left[(u-1)^2(u+1)^2(u^2+h_0)\right]\cdot\nabla^k\Delta udx
\\
=&\int_{\mathbb{R}^3}\nabla^k\left[(u-1)^2(u+1)^2 u^2 \right]\cdot\nabla^k\Delta udx
\\&+h_0\int_{\mathbb{R}^3}\nabla^k\left[(u-1)^2(u+1)^2 \right]\cdot\nabla^k\Delta udx
\\
=&I_{31}+I_{32}
.
\end{aligned}
\end{equation}
Note that
\begin{equation}
\begin{aligned}\label{3-7}
I_{31}=&\int_{\mathbb{R}^3}\nabla^k\left[(u-1)^2(u+1)^2 u^2 \right]\cdot\nabla^k\Delta udx
\\
=&\sum_{0\leq k\leq k}\sum_{0\leq m\leq l}C_k^lC_l^m\int_{\mathbb{R}^3}\nabla^m(u-1)^2\cdot\nabla^{l-m}(u+1)^2\cdot\nabla^{k-l}u^2\cdot\nabla^{k}\Delta udx
\\
\lesssim&\|\nabla^m(u-1)^2\|_{L^6}\|\nabla^{l-m}(u+1)^2\|_{L^6}\|\nabla^{k-l}u^2\|_{L^6}\|\nabla^{k}\Delta u\|_{L^2}
\\
\lesssim&\|u-1\|_{L^{\infty}}\|\nabla^m(u-1)\|_{L^6}\|u+1\|_{L^{\infty}}\|\nabla^{l-m}(u+1)\|_{L^6}\|u\|_{L^{\infty}}\|\nabla^{k-l}u\|_{L^6}\|\nabla^{k+2}u\|_{L^2}
\\
\lesssim&(\|\nabla u\|_{L^2}^{\frac12}\|\nabla^2u\|_{L^2}^{\frac12})^3\|\nabla^{m+1}u\|_{L^2}\|\nabla^{l-m+1}u\|_{L^2}\|\nabla^{k-l+1}u\|_{L^2}\|\nabla^{k+2}u\|_{L^2}
\\
\lesssim&(\|\nabla u\|_{L^2}^3+\|\nabla^2u\|_{L^2}^3)
\\
&\cdot(\|\nabla^{k+2}u\|_{L^2}^{\frac{m+1}{k+2}}\|u\|_{L^2}^{1-\frac{m+1}{k+2}}\|\nabla^{k+2}u\|_{L^2}^{\frac{l-m+1}{k+2}}\|u\|_{L^2}^{1-\frac{l-m+1}{k+2}}
\|\nabla^{k+3}u\|_{L^2}^{\frac{k-l}{k+2}}\|\nabla u\|_{L^2}^{1-\frac{k-l}{k+2}}\|\nabla^{k+2}u\|_{L^2}
\\
\lesssim&\|u\|_{H^2}^5( \|\nabla^{k+3}u\|_{L^2}^2+\|\nabla^{k+2}u\|_{L^2}^2)\\
\lesssim&\varepsilon^5( \|\nabla^{k+3}u\|_{L^2}^2+\|\nabla^{k+2}u\|_{L^2}^2),
\end{aligned}
\end{equation}
 and
 \begin{equation}
\begin{aligned}\label{3-8}
I_{32}=&h_0\int_{\mathbb{R}^3}\nabla^k\left[(u-1)^2(u+1)^2 \right]\cdot\nabla^k\Delta udx
\\
\\
=&h_0\sum_{0\leq k\leq k}\sum_{0\leq m\leq l}C_k^lC_l^m\int_{\mathbb{R}^3}\nabla^m(u-1) \cdot\nabla^{l-m}(u-1)\cdot\nabla^{k-l}(u+1)^2\cdot\nabla^{k}\Delta udx
\\
\lesssim&\|\nabla^m(u-1) \|_{L^6}\|\nabla^{l-m}(u-1)\|_{L^6}\|\nabla^{k-l}(u+1)^2\|_{L^6}\|\nabla^{k}\Delta u\|_{L^2}
\\
\lesssim&\|u+1\|_{L^{\infty}}\|\nabla^m(u-1)\|_{L^6} \|\nabla^{l-m}(u-1)\|_{L^6} \|\nabla^{k-l}(u+1)\|_{L^6}\|\nabla^{k+2}u\|_{L^2}
\\
\lesssim& \|\nabla u\|_{L^2}^{\frac12}\|\nabla^2u\|_{L^2}^{\frac12} \|\nabla^{m+1}u\|_{L^2}\|\nabla^{l-m+1}u\|_{L^2}\|\nabla^{k-l+1}u\|_{L^2}\|\nabla^{k+2}u\|_{L^2}
 \\
\lesssim&\|u\|_{H^2}^3( \|\nabla^{k+3}u\|_{L^2}^2+\|\nabla^{k+2}u\|_{L^2}^2)\\
\lesssim&\varepsilon^3( \|\nabla^{k+3}u\|_{L^2}^2+\|\nabla^{k+2}u\|_{L^2}^2).
\end{aligned}
\end{equation}
Plugging the estimates (\ref{3-4})-(\ref{3-8}) into (\ref{3-3}), because $\varepsilon$ is small, we then obtain (\ref{3-2}) and complete the proof.
\end{proof}
\subsection{Local well-posedness}

In this subsection, we prove the local well-posedness of solution $ u(t) $ in $H^2$-norm.

We first construct the solution $ (u^j )_{j\geq0}$ by solving iteratively the Cauchy problem:
\begin{equation} \label{4-1}
\left\{ \begin{aligned}
&\partial_tu^{j+1}-\delta\Delta^3u^{j+1}+\kappa_0\Delta^2u^{j+1}=-\Delta\left[ \kappa_1\left(u^j+ \kappa_2\right)\left(u^j-\kappa_2\right)\Delta u^{j+1}\right.
\\
&\quad\quad\quad\quad\quad\quad\quad\quad\!\!\!\left.+\kappa_1u^j \nabla u^j\cdot\nabla u^{j+1}-(u^j-1)^2(u^j+1) [(u^j)^2+h_0](u^{j+1}+1) \right] ,\\
                 & u^{j+1} |_{t=0}= u_0(x) ,\quad x\in \mathbb{R}^3,
                          \end{aligned} \right.
                          \end{equation}
for $j\geq0$, where $ u^0 \equiv0$ holds. One denote $( u^j )_{j\geq0}$ in short hand by $(\mathcal{A}_j)_{j\geq0}$  and denote $ u_0$ as $\mathcal{A}_0$.
In the following, we shall show that $(\mathcal{A}^j)_{j\geq0}$ is a Cauchy sequence in Banach space $C([0,T_1];H^2)$ with $T_1>0$ suitable small. Then, by take limit and continuous argument, one propose to prove that $ u(t)$ is a global solution to Cauchy problem (\ref{1-1}).

\begin{lemma}
\label{lem4.1}
Suppose that all assumptions in Theorem \ref{thm1.1} hold. There are constants $\varepsilon_1>0$, $T_1>0$ and $M_1>0$ such that if $\|\mathcal{A}_0\|_{H^2}\leq\varepsilon_1$, then for each $j\geq0$, $\mathcal{A}^j\in C([0,T_1];H^2)$ is well defined and
\begin{equation}
\label{4-2}
\sup_{0\leq t\leq T_1}\|\mathcal{A}^j(t)\|_{H^2}\leq M_1,\quad j\geq0.
\end{equation}
Moreover, $(\mathcal{A}^j)_{j\geq0}$ is a Cauchy sequence in Banach space $C([0,T_1];H^2)$, the corresponding limit function denoted by $\mathcal{A}(t)$ belongs to $C([0,T_1];H^2)$ with
\begin{equation}
\label{4-3}
\sup_{0\leq t\leq T_1}\|\mathcal{A}(t)\|_{H^2}\leq M_1,
\end{equation}
and $\mathcal{A}=u(t)$ is a solution over $[0,T_1]$ to problem (\ref{1-1}). Finally, for the Cauchy problem (\ref{1-1}), there exists at most one solution  $ u(t)$ in $C([0,T_1];H^2)$ satisfying (\ref{4-3}).
\end{lemma}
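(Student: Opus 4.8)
The plan is to treat (\ref{4-1}) as a \emph{linear} parabolic problem at each stage and to run a Picard-type argument in the Banach space $C([0,T_1];H^2)$, using the smallness of the data and the shortness of $T_1$ to close simultaneously a uniform bound and a contraction estimate. First I would check that each iterate is well defined. Given $u^j\in C([0,T_1];H^2)$, equation (\ref{4-1}) is linear in $u^{j+1}$: its principal part is the constant-coefficient sixth-order operator $\partial_t-\delta\Delta^3+\kappa_0\Delta^2$, which is handled directly through the Fourier transform (equivalently, generates an analytic semigroup on $L^2(\mathbb{R}^3)$), while every term on the right-hand side carrying $u^{j+1}$ is of order at most two in $u^{j+1}$ with coefficients built from $u^j$, hence genuinely lower order relative to the sixth-order principal part. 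Standard linear parabolic theory, or a Galerkin approximation coupled with the energy estimate below, then produces a unique $u^{j+1}\in C([0,T_1];H^2)$; the base case $u^0\equiv0$ is immediate.

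The heart of the argument is the uniform bound (\ref{4-2}), which I would establish by induction on $j$ via an $H^2$ energy estimate on (\ref{4-1}). Applying $\nabla^k$ for $k=0,1,2$, pairing with $\nabla^k u^{j+1}$, and integrating by parts yields
$$
\frac12\frac{d}{dt}\|u^{j+1}\|_{H^2}^2
+\delta\sum_{k=0}^{2}\|\nabla^{k+3}u^{j+1}\|_{L^2}^2
+\kappa_0\sum_{k=0}^{2}\|\nabla^{k+2}u^{j+1}\|_{L^2}^2=\mathcal{R}_j,
$$
where $\mathcal{R}_j$ collects the three nonlinear contributions. Estimating $\mathcal{R}_j$ proceeds exactly as in the proof of Lemma \ref{lem3.1}: the coefficients depending on $u^j$ are controlled in $L^\infty$ by Agmon's inequality (Lemma \ref{Agm}) together with the inductive bound $\|u^j\|_{H^2}\le M_1$, the derivatives of $u^{j+1}$ are distributed through Gagliardo-Nirenberg (Lemma \ref{lem2.1}) and, where products must be split, Kato-Ponce (Lemma \ref{Kato}), and the crucial second-order term coming from $\Delta f$ is absorbed \emph{not} by the sixth-order dissipation but by the fourth-order dissipation $\kappa_0\sum_k\|\nabla^{k+2}u^{j+1}\|_{L^2}^2$. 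After absorbing small multiples of the dissipation into the left side, I obtain a differential inequality
$$
\frac{d}{dt}\|u^{j+1}(t)\|_{H^2}^2\le C_1\bigl(1+\|u^{j+1}(t)\|_{H^2}^2\bigr),\qquad C_1=C_1(M_1),
$$
and integrating over $[0,T_1]$ with $\|u^{j+1}(0)\|_{H^2}=\|u_0\|_{H^2}\le\varepsilon_1$ gives $\sup_{[0,T_1]}\|u^{j+1}\|_{H^2}\le M_1$ once $\varepsilon_1$ and $T_1$ are chosen small, closing the induction.

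For the Cauchy property and the passage to the limit, I set $w^{j+1}=u^{j+1}-u^j$, subtract the two copies of (\ref{4-1}), and estimate $w^{j+1}$ in $H^2$. The difference equation is again linear in $w^{j+1}$ with the same principal part, forced by terms that are either linear in $w^{j+1}$ with small coefficients or linear in $w^{j}=u^{j}-u^{j-1}$ with coefficients bounded by $M_1$. The same absorption yields
$$
\sup_{0\le t\le T_1}\|w^{j+1}(t)\|_{H^2}\le\theta\sup_{0\le t\le T_1}\|w^{j}(t)\|_{H^2},\qquad\theta<1,
$$
provided $\varepsilon_1,T_1$ are small enough, so $(u^j)_{j\ge0}$ is Cauchy in $C([0,T_1];H^2)$. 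The limit $u$ inherits (\ref{4-3}) by lower semicontinuity of the norm; convergence in $C([0,T_1];H^2)$ together with the uniform bound lets me pass to the limit in every nonlinear product, since $H^2(\mathbb{R}^3)\hookrightarrow L^\infty$ and $H^2$ is a Banach algebra, so $u$ solves (\ref{1-1}) on $[0,T_1]$. Uniqueness follows by applying the same difference estimate to two solutions in $C([0,T_1];H^2)$ obeying (\ref{4-3}) and invoking Gronwall.

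The main obstacle is the energy estimate underlying both the uniform bound and the contraction, specifically the handling of the second-order nonlinear term inside the $H^2$ framework. As stressed in the introduction, in three dimensions a sixth-order linear term cannot control a second-order nonlinearity through Sobolev embedding, so I must route that term through the fourth-order dissipation $\kappa_0\Delta^2u$; keeping track of which derivatives fall on $u^j$ versus $u^{j+1}$ and arranging the Gagliardo-Nirenberg interpolation so that all top-order factors are absorbable is the delicate bookkeeping, entirely parallel to (though slightly heavier than) the estimates (\ref{3-4})--(\ref{3-8}).
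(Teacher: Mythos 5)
Your proposal is correct and follows essentially the same route as the paper: an induction on $j$ closing the uniform $H^2$ bound through energy estimates in which the nonlinear terms (in particular the second-order one coming from $\Delta f$) are absorbed into the fourth-order dissipation $\kappa_0\|\nabla^{k+2}u^{j+1}\|_{L^2}^2$ using the smallness of $M_1$, followed by the same difference estimate to get contraction in $C([0,T_1];H^2)$, passage to the limit, and uniqueness. The only cosmetic differences are that you make the well-definedness of each linear iterate explicit (the paper leaves this implicit) and phrase the closing step via Gronwall on $[0,T_1]$, whereas the paper absorbs the entire right-hand side into the time-integrated dissipation; both yield the same conclusion.
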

\begin{proof}The inequality(\ref{4-2}) will be proved by induction. By using the assumption at initial step, we get $\mathcal{A}_0=0$, which means $j=0$ holds. Next, suppose that (\ref{4-2}) holds for $j\geq0$ with $M_1>0$ small enough to be determined later, we are going to prove it also holds for $j+1$. Hence, we need some energy estimates on $\mathcal{A}^{j+1}$.  On the basis of (\ref{4-1})$_1$, we obtain for $k=2$ and $0\leq l\leq k$,
\begin{equation}
\begin{aligned}
\label{4-4}
&\frac12\frac d{dt} \|\nabla^lu^{j+1}\|_{L^2}^2 +\delta\|\nabla^{l+3}u^{j+1}\|_{L^2}^2+\kappa\|\nabla^{l+2}u^{j+1}\|_{L^2}^2
\\
=&-\kappa_1\int_{\mathbb{R}^3}\nabla^l[(u^j+\kappa_2)(u^j-\kappa_2)\Delta u^{j+1}]\cdot\nabla^l\Delta u^{j+1}dx
\\
&-\kappa_1\int_{\mathbb{R}^3}\nabla^l[u^j\nabla u^j\cdot\nabla u^{j+1}]\cdot\nabla ^l\Delta u^jdx
\\
&-2\int_{\mathbb{R}^3}\nabla^{l} [(u^j-1) (u^j+1)^2((u^j)^2+h_0)\nabla u^{j+1}]\cdot\nabla^{l+1}u^{j+1}dx
\\&-2\int_{\mathbb{R}^3}\nabla^{l} [(u^j+1) (u^j-1)^2((u^j)^2+h_0)\nabla u^{j+1}\cdot\nabla^{l+1}u^{j+1}dx\\&-2\int_{\mathbb{R}^3}\nabla^{l} [(u^j-1) ^2 (u^j+1)^2 \nabla u^{j+1}]\cdot\nabla^{l+1}u^{j+1}dx
\\
=&J_1+J_2+J_3+J_4+J_5.
\end{aligned}
\end{equation}
Since it is trivial for the case $l=0$, thus, we   only need to consider $l=1, 2$. For the term $J_1$, if $l=1$, we estimate as follows
\begin{equation}
\begin{aligned}\label{4-5}
J_1=&-\kappa_1\int_{\mathbb{R}^3}\nabla [(u^j+\kappa_2)(u^j-\kappa_2)\Delta u^{j+1}]\cdot\nabla \Delta u^{j+1}dx
\\
\lesssim&\|\nabla^3 u^{j+1}\|_{L^2}\|\nabla[(u^j+\kappa_2)(u^j-\kappa_2)\Delta u^{j+1}]\|_{L^{2}}
\\
\lesssim&\sum_{s=0}^1\sum_{m=0}^sC_1^sC_s^m\|\nabla^3u^{j+1}\|_{L^2}\|\nabla^m(u^j+\kappa_2)\|_{L^6}\|\nabla^{s-m}(u^j-\kappa_2)\|_{L^6}\|\nabla^{1-s+2}u^{j+1}\|_{L^6}
\\
\lesssim&\sum_{s=0}^1\sum_{m=0}^sC_1^sC_s^m\|\nabla^3u^{j+1}\|_{L^2}\|\nabla^{m+1}u^j \|_{L^2}\|\nabla^{s-m+1}u^j \|_{L^2}\|\nabla^{1-s+3}u^{j+1}\|_{L^2}
\\
\lesssim&\|u^j\|_{H^2}^2(\|\nabla^2u^{j+1}\|_{H^2}^2+\|\nabla^3u^{j+1}\|_{H^2}^2) .
\end{aligned}
\end{equation}
For the case $l=2$, we can estimate $J_1$ as
\begin{equation}
\begin{aligned}\label{4-5-1}
J_1
\lesssim&\|\nabla^4u^{j+1}\|_{L^6}\|\nabla^2 [(u^j+\kappa_2)(u^j-\kappa_2)\Delta u^{j+1}]\|_{L^{\frac65}}
\\
\lesssim&\|\nabla^4u^{j+1}\|_{L^6}\left\|\sum_{s=0}^2\sum_{m=0}^s\nabla^m(u^j+\kappa_2)\cdot\nabla^{s-m}(u^j-\kappa_2)\cdot\nabla^{4-s}u^{j+1}\right\|_{L^{\frac65}}
\\
\lesssim&\|\nabla^5u^{j+1}\|_{L^2}\left(\underbrace{\|u^j+\kappa_2)\|_{L^6}\|u^j-\kappa_2\|_{L^6}\|\nabla^4u^{j+1}\|_{L^2}}_{s=m=0}\right.
\\
&
+\underbrace{\|u^j+\kappa_2\|_{L^6}\|\nabla u^j \|_{L^6}\|\nabla^3u^{j+1}\|_{L^2}}_{s=1,m=0}
 +\underbrace{\|\nabla u^j \|_{L^6}\| u^j-\kappa_2 \|_{L^6}\|\nabla^3u^{j+1}\|_{L^2}}_{s=m=1}\\
&+\underbrace{\| u^j+\kappa_2 \|_{L^6}\|\nabla^2  u^j  \|_{L^2}\|\nabla^2u^{j+1}\|_{L^6}}_{s=2,m=0}
 +\underbrace{\|\nabla  u^j  \|_{L^6}\|\nabla  u^j  \|_{L^2}\|\nabla^2u^{j+1}\|_{L^6}}_{s=2,m=1}\\
&\left.+\underbrace{\|\nabla^2( u^j+\kappa_2) \|_{L^2}\| u^j-\kappa_2   \|_{L^6}\|\nabla^2u^{j+1}\|_{L^6}}_{s=2,m=2}\right)
\\
\lesssim&\|u^j\|_{H^2}^2(\|\nabla^2u^{j+1}\|_{H^2}^2+\|\nabla^3u^{j+1}\|_{H^2}^2) .
\end{aligned}
\end{equation}
For the term $J_2$, if $l=1$, we have
\begin{equation}
\begin{aligned}\label{4-6}
J_2=&-\kappa_1\int_{\mathbb{R}^3}\nabla [u^j\nabla u^j\cdot\nabla u^{j+1}]\cdot\nabla  \Delta u^jdx
\\
\lesssim&\|\nabla^3 u^{j+1}\|_{L^6}\|\nabla [u^j\nabla u^j\cdot\nabla u^{j+1}]\|_{L^{\frac65}}
\\
\lesssim&\sum_{0\leq s\leq 1 }\sum_{0\leq m\leq s}\|\nabla^3 u^{j+1}\|_{L^6}\|\nabla^mu^j\cdot\nabla^{s-m}\nabla u^j\cdot\nabla^{1-s}\nabla u^{j+1} \|_{L^{\frac65}}
\\
\lesssim&\sum_{0\leq s\leq 1 }\sum_{0\leq m\leq s}\|\nabla^3 u^{j+1}\|_{L^6}\|\nabla^mu^j\|_{L^6}\| \nabla^{s+1-m}  u^j\|_{L^2}\|\nabla^{1-s+1} u^{j+1} \|_{L^{6}}\\
\lesssim&\sum_{0\leq s\leq 1 }\sum_{0\leq m\leq s}\|\nabla^4 u^{j+1}\|_{L^2}\|\nabla^{m+1}u^j\|_{L^2}\| \nabla^{s+1-m}  u^j\|_{L^2}\|\nabla^{1-s+2} u^{j+1} \|_{L^{2}}
\\
\lesssim&\|u^j\|_{H^2}^2(\|\nabla^2u^{j+1}\|_{H^2}^2+\|\nabla^3u^{j+1}\|_{H^2}^2) .
\end{aligned}
\end{equation}
For the case $l=2$ of $J_2$, we estimate as
\begin{equation}
\begin{aligned}
\label{4-6-1}
J_2\lesssim&\sum_{s=0}^1\sum_{m=0}^s \|\nabla^4 u^{j+1}\|_{L^6}\|\nabla^mu^j\cdot\nabla^{s-m}\nabla u^j\cdot\nabla^{2-s}\nabla   u^{j+1}]\|_{L^{\frac65}}
\\
&\underbrace{-\kappa_1 \sum_{0\leq m\leq 2}C_2^m\int_{\mathbb{R}^3}\nabla^m u^j\cdot\nabla^{2-m}\nabla u^j\cdot\nabla u^{j+1} \cdot\nabla^2  \Delta u^{j+1}dx}_{s=2}
\\
=&J_{21}+J_{22}.
\end{aligned}
\end{equation}
Note that
\begin{equation}
\begin{aligned}\label{4-6-2}
J_{21}=&\sum_{s=0}^1\sum_{m=0}^s  \|\nabla^4 u^{j+1}\|_{L^6}\|\nabla^mu^j\cdot\nabla^{s-m}\nabla u^j\cdot\nabla^{2-s}\nabla   u^{j+1}]\|_{L^{\frac65}}
\\
\lesssim&\|\nabla^5u^{j+1}\|_{L^2}\left(\underbrace{\|u^j\|_{L^6}\|\nabla u^j\|_{L^6}\|\nabla^3 u^{j+1}\|_{L^2}}_{s=m=0}+\underbrace{\|u^j\|_{L^6}\|\nabla^2 u^j\|_{L^2}\|\nabla^2 u^{j+1}\|_{L^6}}_{s=1,m=0}\right.
\\&\left.+\underbrace{\|\nabla u^j\|_{L^6}\|\nabla u^j\|_{L^2}\|\nabla^2u^{j+1}\|_{L^6}}_{s=m=1}\right)
\\
\lesssim&\|u^j\|_{H^2}^2(\|\nabla^2u^{j+1}\|_{H^2}^2+\|\nabla^3u^{j+1}\|_{H^2}^2),
\end{aligned}
\end{equation}
and
\begin{equation}
\begin{aligned}
\label{4-6-3}
J_{22}=&-\kappa_1 \sum_{0\leq m\leq 2}C_2^m\int_{\mathbb{R}^3}\nabla^m u^j\cdot\nabla^{2-m}\nabla u^j\cdot\nabla u^{j+1} \cdot\nabla^2  \Delta u^{j+1}dx
\\
=& -\kappa_1  \int_{\mathbb{R}^3}  u^j\cdot\nabla^{2 }\nabla u^j\cdot\nabla u^{j+1} \cdot\nabla^2  \Delta u^{j+1}dx
-2\kappa_1  \int_{\mathbb{R}^3}  \nabla u^j\cdot\nabla \nabla u^j\cdot\nabla u^{j+1} \cdot\nabla^2  \Delta u^{j+1}dx
\\&-\kappa_1  \int_{\mathbb{R}^3}  \nabla^2 u^j\cdot \nabla u^j\cdot\nabla u^{j+1} \cdot\nabla^2  \Delta u^{j+1}dx
\\
=&\kappa_1\int_{\mathbb{R}^3} \nabla  u^j\cdot\nabla \nabla u^j\cdot\nabla u^{j+1} \cdot\nabla^2  \Delta u^{j+1}dx+
\kappa_1  \int_{\mathbb{R}^3}  u^j\cdot\nabla \nabla u^j\cdot\nabla^2 u^{j+1} \cdot\nabla^2  \Delta u^{j+1}dx\\&
+\kappa_1  \int_{\mathbb{R}^3}  u^j\cdot\nabla \nabla u^j\cdot\nabla u^{j+1} \cdot\nabla^3  \Delta u^{j+1}dx
-2\kappa_1  \int_{\mathbb{R}^3}  \nabla u^j\cdot\nabla \nabla u^j\cdot\nabla u^{j+1} \cdot\nabla^2  \Delta u^{j+1}dx
\\&-\kappa_1  \int_{\mathbb{R}^3}  \nabla^2 u^j\cdot \nabla u^j\cdot\nabla u^{j+1} \cdot\nabla^2  \Delta u^{j+1}dx
\\
\lesssim&\|\nabla u^j\|_{L^6}\|\nabla^2u^j\|_{L^2}\|\nabla u^{j+1}\|_{L^6}\|\nabla^4u^{j+1}\|_{L^6}
+\|  u^j\|_{L^6}\|\nabla^2u^j\|_{L^2}\|\nabla^2 u^{j+1}\|_{L^6}\|\nabla^4u^{j+1}\|_{L^6}
\\&+\|  u^j\|_{L^{\infty}}\|\nabla^2u^j\|_{L^2}\|\nabla u^{j+1}\|_{L^{\infty}}\|\nabla^5u^{j+1}\|_{L^2}
+\|\nabla u^j\|_{L^6}\|\nabla^2u^j\|_{L^2}\|\nabla u^{j+1}\|_{L^6}\|\nabla^4u^{j+1}\|_{L^6}
\\&+\|\nabla^2 u^j\|_{L^2}\|\nabla u^j\|_{L^6}\|\nabla u^{j+1}\|_{L^6}\|\nabla^4u^{j+1}\|_{L^6}
\\
\lesssim&\|u^j\|_{H^2}^2(\|\nabla^2u^{j+1}\|_{H^2}^2+\|\nabla^3u^{j+1}\|_{H^2}^2).
\end{aligned}
\end{equation}
Combining (\ref{4-6-1})-(\ref{4-6-3}) together, if $l=2$, we obtain
\begin{equation}
\label{4-6-4}\begin{aligned}
J_2\lesssim&\|u^j\|_{H^2}^2(\|\nabla^2u^{j+1}\|_{H^2}^2+\|\nabla^3u^{j+1}\|_{H^2}^2).
\end{aligned}
\end{equation}
Next, for $J_3$, if $l=1$, we have
\begin{equation}
\begin{aligned}
\label{4-6-5}
J_3=&-2\int_{\mathbb{R}^3}\nabla  [(u^j-1) (u^j+1)^2((u^j)^2+h_0)\nabla u^{j+1}]\cdot\nabla^{2}u^{j+1}dx
\\
=&-2\int_{\mathbb{R}^3} (u^j+1)^2((u^j)^2+h_0)\nabla   (u^j-1)\cdot\nabla u^{j+1} \cdot\nabla^{2}u^{j+1}dx
\\&
-4\int_{\mathbb{R}^3} (u^j-1)(u^j+1)((u^j)^2+h_0)\nabla   (u^j+1)\cdot\nabla u^{j+1} \cdot\nabla^{2}u^{j+1}dx
\\
&-4\int_{\mathbb{R}^3}(u^j-1) (u^j+1)^2u^j \nabla    u^j \cdot\nabla u^{j+1} \cdot\nabla^{2}u^{j+1}dx
\\
&-2\int_{\mathbb{R}^3} (u^j-1) (u^j+1)^2((u^j)^2+h_0)\nabla^2 u^{j+1} \cdot\nabla^{2}u^{j+1}dx
\\
\lesssim&\|u^j+1\|_{L^6}^2\|(u^j)^2+h_0\|_{L^6}\|\nabla(u^j-1)\|_{L^6}\|\nabla u^{j+1}\|_{L^6}\|\nabla^2u^{j+1}\|_{L^6}
\\
&+\|u^j-1\|_{L^6} \|u^j+1\|_{L^6}\|(u^j)^2+h_0\|_{L^6}\|\nabla(u^j+1)\|_{L^6}\|\nabla u^{j+1}\|_{L^6}\|\nabla^2u^{j+1}\|_{L^6}
\\
&+\|u^j-1\|_{L^6} \|u^j+1\|_{L^6}^2
  \|u^j\|_{L^{\infty}}\|\nabla u^j \|_{L^6}\|\nabla u^{j+1}\|_{L^6}\|\nabla^2u^{j+1}\|_{L^6}
\\
&+ \|u^j-1\|_{L^6} \|u^j+1\|_{L^6}^2\|(u^j)^2+h_0\|_{L^6} \|\nabla^2 u^{j+1}\|_{L^6}^2
\\
\lesssim&\|u^j\|_{H^2}^5(\|\nabla^2u^{j+1}\|_{H^2}^2+\|\nabla^3u^{j+1}\|_{H^2}^2).
\end{aligned}
\end{equation}
Similarly, if $l=1$, we can also obtain
\begin{equation}
\label{4-6-6}
J_4+J_5\lesssim \|u^j\|_{H^2}^5(\|\nabla^2u^{j+1}\|_{H^2}^2+\|\nabla^3u^{j+1}\|_{H^2}^2).
\end{equation}
By Lemma \ref{Kato}, we consider the case $l=2$ of $J_3$:
\begin{equation}
\begin{aligned}
\label{4-6-7}
J_3=&-2\int_{\mathbb{R}^3}\nabla^2  [(u^j-1) (u^j+1)^2((u^j)^2+h_0)\nabla u^{j+1}]\cdot\nabla^{3}u^{j+1}dx
\\
\lesssim&\|\nabla^3u^{j+1}\|_{L^3}\|\nabla^2  [(u^j-1) (u^j+1)^2((u^j)^2+h_0)\nabla u^{j+1}]\|_{L^{\frac32}}
\\
\lesssim&\|\nabla^3u^{j+1}\|_{L^3} \|(u^j-1) (u^j+1)^2((u^j)^2+h_0)\|_{L^6}\|\nabla^3 u^{j+1}\|_{L^2}
\\
&+\|\nabla^3u^{j+1}\|_{L^3}\|\nabla^2 [(u^j-1) (u^j+1)^2((u^j)^2+h_0)]\|_{L^2}\|\nabla u^{j+1}\|_{L^6}
\\
=&J_{31}+J_{32}.\end{aligned}
\end{equation}By using Sobolev's embedding theorem in $\mathbb{R}^3$, we deduce that
\begin{equation}
\begin{aligned}
\label{4-6-8}
J_{31}=&\|\nabla^3u^{j+1}\|_{L^3} \|(u^j-1) (u^j+1)^2((u^j)^2+h_0)\|_{L^6}\|\nabla^3 u^{j+1}\|_{L^2}
\\
\lesssim&\|\nabla^3u^{j+1}\|_{L^3}\|\nabla^3 u^{j+1}\|_{L^2}\left[\|u^j+1\|_{L^{\infty}}^2(\|u^j\|_{L^{\infty}}^2+|h_0|)\|u^j-1\|_{L^6}\right.
\\
&
+\|u^j-1\|_{L^{\infty}} \|u^j+1\|_{L^{\infty}} (\|u^j\|_{L^{\infty}}^2+|h_0|)\|u^j+1\|_{L^6}\\&\left.+
\|u^j-1\|_{L^{\infty}} \|u^j+1\|_{L^{\infty}} ^2 \|(u^j)^2+h_0\|_{L^6}\right]
\\
\lesssim&\|\nabla^4u^{j+1}\|_{L^2}^{\frac12}\|\nabla^3 u^{j+1}\|_{L^2}^{\frac32}\|u^j\|_{H^2}^5
\\
\lesssim&\|u^j\|_{H^2}^5(\|\nabla^2u^{j+1}\|_{H^2}^2+\|\nabla^3u^{j+1}\|_{H^2}^2),
\end{aligned}
\end{equation}
and
\begin{equation}
\begin{aligned}
\label{4-6-9}
J_{32}=&\|\nabla^3u^{j+1}\|_{L^3}\|\nabla u^{j+1}\|_{L^6} \|\nabla^2 [(u^j-1) (u^j+1)^2((u^j)^2+h_0)]\|_{L^2}
\\
\lesssim&\|\nabla^3u^{j+1}\|_{L^3}\|\nabla u^{j+1}\|_{L^6}  \|\nabla^2 [(u^j-1) (u^j+1)^2 (u^j)^2 ]\|_{L^2}
\\
&+|h_0|\|\nabla^3u^{j+1}\|_{L^3}\|\nabla u^{j+1}\|_{L^6}  \|\nabla^2 [(u^j-1) (u^j+1)^2  ]\|_{L^2}
\\
\lesssim&\|\nabla^3u^{j+1}\|_{L^2}^{\frac12}\|\nabla^4u^{j+1}\|_{L^2}^{\frac12}\|\nabla^2 u^{j+1}\|_{L^2}  (\|u^j\|_{H^2}^3+\|u^j\|_{H^2}^5)
\\
\lesssim&(\|u^j\|_{H^2}^3+\|u^j\|_{H^2}^5)(\|\nabla^2u^{j+1}\|_{H^2}^2+\|\nabla^3u^{j+1}\|_{H^2}^2),
\end{aligned}
\end{equation}
Plugging (\ref{4-6-8}) and (\ref{4-6-9}) into (\ref{4-6-7}), for the case $l=2$, we obtain
\begin{equation}\label{4-6-10}
J_3\lesssim(\|u^j\|_{H^2}^3+\|u^j\|_{H^2}^5)(\|\nabla^2u^{j+1}\|_{H^2}^2+\|\nabla^3u^{j+1}\|_{H^2}^2) .
\end{equation}
Similarly, if $l=2$, we can also obtain
\begin{equation}
\label{4-6-11}
J_4+J_5\lesssim (\|u^j\|_{H^2}^3+\|u^j\|_{H^2}^5)(\|\nabla^2u^{j+1}\|_{H^2}^2+\|\nabla^3u^{j+1}\|_{H^2}^2).
\end{equation}
 Now, summing up the estimates (\ref{4-4}), (\ref{4-5}), (\ref{4-5-1}), (\ref{4-6}), (\ref{4-6-4}), (\ref{4-6-5}), (\ref{4-6-6}), (\ref{4-6-10}) and (\ref{4-6-11}), we arrive at
\begin{equation}
\label{4-8x}\begin{aligned}&
\frac12\frac d{dt} \|u^{j+1}\|_{H^2}^2
 +\delta\|\nabla^3 u^{j+1}\|_{H^2}^2+\kappa\|\nabla^2u^{j+1}\|_{H^2}^2
\\
\leq&C(\|u^j\|_{H^2}^2+\|u^j\|_{H^2}^5 )(\|\nabla^2u^{j+1}\|_{H^2}^2+\|\nabla^3u^{j+1}\|_{H^2}^2).
\end{aligned}
\end{equation}
By taking time integration, we have
\begin{equation}
\begin{aligned}&
\|u^{j+1}\|_{H^2}^2 +\int_0^t(\delta\|\nabla^3 u^{j+1}(s)\|_{H^2}^2+\kappa\|\nabla^2u^{j+1}(s)\|_{H^2}^2 )ds
\\
\leq&C \|u_0\|_{H^2}^2 +C\int_0^t(\|u^j(s)\|_{H^2}^2+\|u^j(s)\|_{H^2}^5 )
 (\|\nabla^3 u^{j+1}(s)\|_{H^2}^2+\|\nabla^2 u^{j+1}(s)\|_{H^2}^2)ds,
\end{aligned}\nonumber
\end{equation}
which from the inductive assumption implies
\begin{equation}
\begin{aligned}&\|u^{j+1}\|_{H^2}^2+ \int_0^t \|\nabla^3 u^{j+1}(s)\|_{H^2}^2+ \|\nabla^2u^{j+1}(s)\|_{H^2}^2 )ds
\\
\leq&C\varepsilon^2+C(M_1^2+M_1^5)\int_0^t(\|\nabla^3 u^{j+1}(s)\|_{H^2}^2+\|\nabla^2 u^{j+1}(s)\|_{H^2}^2)ds,
\end{aligned}\nonumber
\end{equation}
for any $0\leq t\leq T_1$. Take suitable small $\varepsilon>0$, $T_1>0$ and $M_1>0$ such that
\begin{equation}
\label{4-9x}\begin{aligned}&\|u^{j+1}\|_{H^2}^2+ \int_0^t \|\nabla^3 u^{j+1}(s)\|_{H^2}^2+ \|\nabla^2u^{j+1}(s)\|_{H^2}^2 )ds\leq M_1^2,\end{aligned}
\end{equation} for any $t\in[0,T_1]$.
Therefore, (\ref{4-2}) is true for $j+1$ if so for $j$, which implies (\ref{4-2}) is proved for all $j\geq0$.

Next, by using (\ref{4-8x}), we deduce that
\begin{equation}
\begin{aligned}&
\left|\|\mathcal{A}^{j+1}(t)\|_{H^2}^2-\|\mathcal{A}^{j+1}(s)\|_{H^2}^2\right|
\\
=&\left|\int_s^t\frac d{d\tau}\|\mathcal{A}^{j+1}(\tau)\|_{H^2}^2d\tau\right|
\\
\leq&C\int_s^t(\|\mathcal{A}^j(s)\|_{H^2}^2+\|\mathcal{A}^j(s)\|_{H^2}^5)(\|\nabla^3 \mathcal{A}^{j+1}(s)\|_{H^2}^2+\|\nabla^2\mathcal{A}^{j+1}(s)\|_{H^2}^2) ds
\\
\leq&C(M_1^2+M_1^5)\int_s^t(\|\nabla^3 \mathcal{A}^{j+1}(s)\|_{H^2}^2+\|\nabla^2\mathcal{A}^{j+1}(s)\|_{H^2}^2 )ds,\nonumber
\end{aligned}
\end{equation}
for any $t\in[0,T_1]$. Therefore,   due to (\ref{4-9x}), the time integral in the last inequality is finite, and hence $\|\mathcal{A}^{j+1}(t)\|_{H^2}^2$ is continuous  in $t$ for each $j\geq1$. On the other hand, we also need to consider the convergence of the sequence $(\mathcal{A}^j)_{j\geq0}$. Taking the difference of (\ref{4-1})$_1$ for $j$ and $j-1$, it yields that
\begin{equation} \label{4-10x}
 \begin{aligned}
&\partial_t(u^{j+1}-u^j)+ \delta\nabla\Delta (u^{j+1}-u^j)+\kappa_0\Delta (u^{j+1}-u^j)
\\
=&-\kappa_1\Delta\left\{(u^{j}+\kappa_2)(u^j-\kappa_2)(\Delta u^{j+1}-\Delta u^j)\right.\\
&\left.+[(u^{j}+\kappa_2)(u^j-\kappa_2)-(u^{j-1}+\kappa_2)(u^{j-1}-\kappa_2)]\Delta u^j\right\}
\\
&-\kappa_1\Delta\left[u^j\nabla u^j\cdot(\nabla u^{j+1}-\nabla u^j)+(u^j\nabla u^j-u^{j-1}\nabla u^{j-1})\cdot\nabla u^j\right]
\\&+\Delta\left\{(u^j-1)^2(u^j+1) [(u^j)^2+h_0](u^{j+1}-u^j) \right.
\\&\left.-\left[(u^j-1)^2(u^j+1) [(u^j)^2+h_0]-(u^{j-1}-1)^2(u^{j-1}+1) [(u^{j-1})^2+h_0]\right](u^{j}+1) \right\}  ,
                          \end{aligned}
 \end{equation}
Appealing to the same energy estimate as before, we get
\begin{equation}
\label{4-11x}
\begin{aligned}&
\frac12\frac d{dt} \|u^{j+1}-u^j\|_{H^2}^2 +\delta\|\nabla\Delta u^{j+1}-\nabla \Delta u^j\|_{H^2}^2+\kappa\| \Delta u^{j+1}-  \Delta u^j\|_{H^2}^2
\\
\leq&C(\|u^j-u^{j-1}\|_{H^2}^2+\|u^j-u^{j-1}\|_{H^2}^5)( \ \nabla \Delta u^j\|_{H^2}^2+ \|  \Delta u^j\|_{H^2}^2 )
 \\
&+C(\|u^j\|_{H^2}^2 +\|u^j\|_{H^2}^5)( \|\nabla\Delta u^{j+1}-\nabla \Delta u^j\|_{H^2}^2+ \| \Delta u^{j+1}-  \Delta u^j\|_{H^2}^2 ) ,
\end{aligned}
\end{equation}
which is equivalent to
\begin{equation}
\label{4-12}
\begin{aligned}&
\frac12\frac d{dt} \|\mathcal{A}^{j+1}-\mathcal{A}^j\|_{H^2}^2  +\|\nabla^3\mathcal{A}^{j+1}-\nabla^3\mathcal{A}^j\|_{H^2}^2+\|\nabla^2\mathcal{A}^{j+1}-\nabla^2
\mathcal{A}^j\|_{H^2}^2
\\
 \leq&C \|\mathcal{A}^j-\mathcal{A}^{j-1}\|_{H^2}^2(\|\mathcal{A}^j\|_{H^2}^3+\|\mathcal{A}^{j-1}\|_{H^2}^3+1)(\|
 \nabla^3\mathcal{A}^j\|_{H^2}^2+\| \nabla^2
\mathcal{A}^j\|_{H^2}^2) \\
&+C(\|\mathcal{A}^j\|_{H^2}^2+\|\mathcal{A}^j\|_{H^2}^5)(\|\nabla^3\mathcal{A}^{j+1}-\nabla^3\mathcal{A}^j\|_{H^2}^2+\|\nabla^2\mathcal{A}^{j+1}-\nabla^2
\mathcal{A}^j\|_{H^2}^2),
\end{aligned}
\end{equation}
Based on (\ref{4-9x}), by taking time integration, it holds that
\begin{equation}
\begin{aligned}
\nonumber
&\|(\mathcal{A}^{j+1}-\mathcal{A}^j)(t)\|_{H^2}^2 +\int_0^t(\|\nabla^3\mathcal{A}^{j+1}-\nabla^3\mathcal{A}^j\|_{H^2}^2+\|\nabla^2\mathcal{A}^{j+1}-\nabla^2
\mathcal{A}^j\|_{H^2}^2)ds\\
\leq&C(M_1^2+M_1^5)\sup_{0\leq s\leq T_1} \| \mathcal{A}^{j }-\mathcal{A}^{j-1} \|_{H^2}^2
\\
&+C(M_1^2+M_1^5)\int_0^t(\|\nabla^3\mathcal{A}^{j+1}-\nabla^3\mathcal{A}^j\|_{H^2}^2+\|\nabla^2\mathcal{A}^{j+1}-\nabla^2
\mathcal{A}^j\|_{H^2}^2)ds.
\end{aligned}
\end{equation}Since $M_1$ is sufficiently small,  there exists a constant $\lambda\in(0,1)$ such that
\begin{equation}
\label{4-13x}
\sup_{0\leq t\leq T_1}\|\mathcal{A}^{j+1}(t)-\mathcal{A}^j(t)\|_{H^2}^2\leq\lambda\sup_{0\leq t\leq T_1}\|\mathcal{A}^{j}(t)-\mathcal{A}^{j-1}(t)\|_{H^2}^2,
\end{equation}
for any $j\geq1$. Hence, $(\mathcal{A}^j)_{j\geq0}$ is a Cauchy sequence in the Banach space $C([0,T_1];H^2)$, the limit function
$$
\mathcal{A}(t)=\mathcal{A}_0+\lim_{n\rightarrow\infty}\sum_{j=0}^n(\mathcal{A}^{j+1}-\mathcal{A}^j)
$$
exists in $C([0,T_1];H^2)$, satisfies
$$
\sup_{0\leq t\leq T_1}\|\mathcal{A}(t)\|_{H^2}^2\leq \sup_{0\leq t\leq T_1}\liminf_{j\rightarrow\infty}\|\mathcal{A}(t)\|_{H^2}\leq M_1.
$$
Hence, (\ref{4-3}) is proved. Finally, suppose that $\mathcal{A}(t)$ and $\tilde{\mathcal{A}}(t)$ are two solutions in $C([0,T_1];H^2)$ satisfying (\ref{4-3}). By using the same process as in ( \ref{4-13x}) to prove the convergence of $(\mathcal{A}^j)_{j\geq0}$, we find that
$$
\sup_{0\leq t\leq T_1}\|\mathcal{A} (t)-\tilde{\mathcal{A}}(t)\|_{H^2}^2\leq\lambda\sup_{0\leq t\leq T_1}\|\mathcal{A} (t)-\tilde{\mathcal{A}}(t)\|_{H^2}^2,
$$for $\lambda\in(0,1)$, which implies that $\mathcal{A}(t)=\tilde{\mathcal{A}}(t)$.
The proof of uniqueness is complete and thus the proof of Lemma \ref{lem4.1} is complete too.

\end{proof}

\subsection{Global well-posedness}

In this subsection, we shall combine all the energy estimates that we have derived in the previous sections and the Sobolev interpolation to prove Theorem \ref{thm1.1}.

\begin{proof}[Proof of Theorem \ref{thm1.1}]
We first close the energy estimates at each $l$-th level in our weak sense to prove (\ref{1-3}). Let $N\geq1$ and $0\leq l\leq m-1$ with $1\leq m\leq N$.
Summing up the estimates (\ref{3-3}) of Lemma \ref{lem3.1} from $k=l$ to $m$, we easily obtain
\begin{equation}
\begin{aligned}\label{6-1}&
\frac d{dt}\sum_{l\leq k\leq m} \|\nabla^ku\|_{L^2}^2 + \sum_{l\leq k\leq m}(\|\nabla^{k+3}u\|_{L^2}^2+\|\nabla^{k+2}u\|_{L^2}^2)
\\
\leq& (\varepsilon^2+\varepsilon^5)\sum_{l\leq k\leq m}((\|\nabla^{k+3}u\|_{L^2}^2+\|\nabla^{k+2}u\|_{L^2}^2 ).
\end{aligned}\end{equation}
Since $\varepsilon>0$ is small, we deduce that there exists a constant $C>0$ such that for $0\leq l\leq m-1$,
\begin{equation}
\begin{aligned}\label{6-2}
\frac d {dt}\sum_{l\leq k\leq m}\|\nabla^ku\|_{L^2}^2 +C_0 \sum_{l\leq k\leq m}(\|\nabla^{k+3}u\|_{L^2}^2+\|\nabla^{k+2}u\|_{L^2}^2)
 \leq 0.
\end{aligned}\end{equation}
Define $\mathcal{E}_l^m(t)$ to be $\frac1{C_0}$ times the expression under the time derivative in (\ref{6-2}). Hence, we may write (\ref{6-2}) as that for $0\leq l\leq m-1$,
\begin{equation}
\label{6-3}
\frac d{dt}\mathcal{E}_l^m(t) +\|\nabla^{l+3}u\|_{H^{m-l}}^2+\|\nabla^{l+2}u\|_{H^{m-l}}^2
\lesssim 0.
\end{equation}
Taking $l=0$ and $m=3$ in (\ref{6-3}) and integrating directly in time, we deduce that
\begin{equation}
\label{6-4}
\|u(t)\|_{H^2}^2 \lesssim \mathcal{E}_0^2(0)\lesssim\|u_0\|_{H^2}^2 .
\end{equation}
 By a standard continuity argument, this closes the a priori estimates (\ref{1-2}) if at the initial time $\|u_0\|_{H^2}^2 $ is sufficiently small. This in turn allows us to take $l=0$ and $m=N$ in (\ref{6-4}), and then integrate it directly in time to obtain (\ref{1-3}). Hence,   the proof of Theorem \ref{thm1.1} is complete.
\end{proof}

\section{Proof of Theorem \ref{thm1.2}}
\subsection{Negative Sobolev estimates}

In this subsection, we  derive the evolution of the negative Sobolev norms of the solution.
\begin{lemma}
\label{lem5-1}
Suppose that all assumptions in Theorem \ref{thm1.1} hold. Then, for $s\in[0,\frac12]$, we have
\begin{equation}
\label{5-1}
\frac d{dt}\|\Lambda^{-s}u\|_{L^2}^2+\|\Lambda^{-s}\nabla\Delta u\|_{L^2}^2+\|\Lambda^{-s}\Delta u\|_{L^2}^2\lesssim \|\nabla^2u\|_{H^2}^2\|\Lambda^{-s}u\|_{L^2}.
\end{equation}
\end{lemma}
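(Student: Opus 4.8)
The plan is to apply the operator $\Lambda^{-s}$ to equation (\ref{1-1})$_1$, multiply by $\Lambda^{-s}u$, and integrate over $\mathbb{R}^3$. The linear principal part reproduces the two dissipative terms on the left-hand side of (\ref{5-1}): integrating $\delta\Lambda^{-s}\Delta^3u$ and $\kappa_0\Lambda^{-s}\Delta^2u$ against $\Lambda^{-s}u$ by parts yields $\delta\|\Lambda^{-s}\nabla\Delta u\|_{L^2}^2$ and $\kappa_0\|\Lambda^{-s}\Delta u\|_{L^2}^2$, and after discarding the positive constants $\delta,\kappa_0$ these absorb into the two good terms. Thus the whole content of the lemma is to show that the nonlinear terms — the images under $\Lambda^{-s}\Delta$ of $\kappa_1(u+\kappa_2)(u-\kappa_2)\Delta u$, of $\kappa_1u|\nabla u|^2$, and of $(u-1)^2(u+1)^2(u^2+h_0)$ — when paired against $\Lambda^{-s}u$, are all bounded by the right-hand side $\|\nabla^2u\|_{H^2}^2\|\Lambda^{-s}u\|_{L^2}$.

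The key step for each nonlinear term is to move one copy of $\Lambda^{-s}$ and one copy of $\Delta$ (or $\nabla$) onto the test function by integration by parts, then apply the Hardy--Littlewood--Sobolev/Sobolev inequality. Concretely, for a nonlinear expression $\mathcal{N}$ I would write
\begin{equation}
\nonumber
\int_{\mathbb{R}^3}\Lambda^{-s}\Delta\,\mathcal{N}\cdot\Lambda^{-s}u\,dx
=\int_{\mathbb{R}^3}\Lambda^{-s}\mathcal{N}\cdot\Lambda^{-s}\Delta u\,dx
\lesssim\|\Lambda^{-s}\mathcal{N}\|_{L^2}\|\Lambda^{-s}\Delta u\|_{L^2},
\end{equation}
but since I want a factor $\|\Lambda^{-s}u\|_{L^2}$ rather than $\|\Lambda^{-s}\Delta u\|_{L^2}$, the cleaner route is to keep all derivatives on $\mathcal{N}$, that is, estimate $\int \Lambda^{-s}(\Delta\mathcal{N})\cdot\Lambda^{-s}u\,dx\lesssim\|\Lambda^{-s}\Delta\mathcal{N}\|_{L^2}\|\Lambda^{-s}u\|_{L^2}$, and then bound $\|\Lambda^{-s}\Delta\mathcal{N}\|_{L^2}$. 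Using Lemma \ref{lem2.3} with $\frac12+\frac s3=\frac1p$, we have $\|\Lambda^{-s}g\|_{L^2}\lesssim\|g\|_{L^p}$ for $p\in[\tfrac32,2]$ when $s\in[0,\tfrac12]$, so it suffices to control $\|\Delta\mathcal{N}\|_{L^p}$. Applying the Leibniz rule, Hölder, the Gagliardo--Nirenberg inequality (Lemma \ref{lem2.1}) and Agmon's inequality (Lemma \ref{Agm}) to distribute derivatives, each $\|\Delta\mathcal{N}\|_{L^p}$ reduces to products of $L^q$-norms of derivatives of $u$ up to order four, which are in turn interpolated into $\|\nabla^2u\|_{H^2}$; since the a priori bound from Theorem \ref{thm1.1} keeps $\|u\|_{H^2}$ small, the lower-order polynomial factors in $u$ are bounded, and one collects the leading quadratic behaviour $\|\nabla^2u\|_{H^2}^2$.

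The main obstacle is the low-order term $(u-1)^2(u+1)^2(u^2+h_0)$, whose expansion contains a constant-order piece $h_0$ multiplying $(u\mp1)$-type factors; this produces, after differentiation, terms such as $\nabla^2 u$ times smooth functions of $u$ with \emph{no} extra derivative gain, so one must be careful that the $L^p$ estimate still closes with exactly two factors of $\|\nabla^2u\|_{H^2}$ and the correct negative-norm factor. The delicate point is the interplay between the negative index $-s$ and the choice of $p$: one must verify that for every product arising in $\Delta\mathcal{N}$ there is a Hölder splitting with all exponents in the admissible range $(1,\infty)$ (in particular the two-and-three derivative pieces land in $L^6$ via Sobolev embedding and the top piece in $L^2$), and that the Gagliardo--Nirenberg exponents $\theta$ in (\ref{2-2}) fall in $[0,1]$. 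Once the admissibility is checked term by term, the smallness of $\|u\|_{H^2}\le\varepsilon$ lets me absorb the harmless lower-order polynomial factors into the universal constant, completing the estimate (\ref{5-1}).
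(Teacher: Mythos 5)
Your proposal is correct and follows essentially the same route as the paper: apply $\Lambda^{-s}$, test against $\Lambda^{-s}u$, Leibniz-expand $\Delta\mathcal{N}$ for each nonlinearity, peel off $\|\Lambda^{-s}u\|_{L^2}$ by Cauchy--Schwarz, bound $\|\Lambda^{-s}(\cdot)\|_{L^2}$ by $\|\cdot\|_{L^p}$ with $\tfrac1p=\tfrac12+\tfrac s3$ via Lemma \ref{lem2.3}, and close with the H\"older splitting $L^\infty\!\times\!L^{3/s}\!\times\!L^2$, Gagliardo--Nirenberg, Agmon, and the uniform $H^2$ bound from Theorem \ref{thm1.1}. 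No substantive difference from the paper's argument.
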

\begin{proof}
Applying $\Lambda^{-s}$ to (\ref{1-1}), multiplying the resulting identity by $\Lambda^{-s}u$, integrating over $\mathbb{R}^3$ by parts, we deduce that
\begin{equation}
\begin{aligned}\label{5-2}&
\frac12\frac d{dt}\|\Lambda^{-s}u\|_{L^2}^2+\delta\|\Lambda^{-s}\nabla\Delta u\|_{L^2}^2+\kappa_0\|\Lambda^{-s}\Delta u\|_{L^2}^2
\\
=&-\kappa_1\int_{\mathbb{R}^3}\Lambda^{-s}\left\{\Delta\left[\left(u+ \kappa_2\right)\left(u-\kappa_2\right)\Delta u\right]\right\}\cdot\Lambda^{-s}udx
 -\kappa_1\int_{\mathbb{R}^3}\Lambda^{-s}\left[\Delta\left(u|\nabla u|^2\right)\right]\cdot\Lambda^{-s}udx
\\&+\int_{\mathbb{R}^3}\Lambda^{-s}\Delta\left[(u-1)^2(u+1)^2(u^2+h_0)\right]\cdot\Lambda^{-s}udx
\\
=&K_1+K_2+K_3
.
\end{aligned}
\end{equation}
We will estimate the three terms of the right hand side of (\ref{5-2}) one by one. For the first term, we have
\begin{equation}
\begin{aligned}
\label{5-3}
K_1=&-\kappa_1\int_{\mathbb{R}^3}\Lambda^{-s}\left\{\Delta\left[\left(u+ \kappa_2\right)\left(u-\kappa_2\right)\Delta u\right]\right\}\cdot\Lambda^{-s}udx
\\
\lesssim&\int_{\mathbb{R}^3}|\Lambda^{-s}(|\nabla u|^2\Delta u)||\Lambda^{-s}u|dx+\int_{\mathbb{R}^3}|\Lambda^{-s}(u|\Delta u|^2)||\Lambda^{-s}u|dx
\\
&+
\int_{\mathbb{R}^3}|\Lambda^{-s}(u\nabla u\nabla\Delta u)||\Lambda^{-s}u|dx+\int_{\mathbb{R}^3}|\Lambda^{-s}((u+\kappa_2)(u-\kappa_2)\Delta^2 u)||\Lambda^{-s}u|dx
\\
\lesssim&\|\Lambda^{-s}u\|_{L^2} \|\Lambda^{-s}(|\nabla u|^2\Delta u)\|_{L^2}+\|\Lambda^{-s}u\|_{L^2} \| \Lambda^{-s}(u|\Delta u|^2)\|_{L^2}
\\
&
+\|\Lambda^{-s}u\|_{L^2} \| \Lambda^{-s}(u\nabla u\nabla\Delta u)\|_{L^2}+\|\Lambda^{-s}u\|_{L^2} \| \Lambda^{-s}((u+\kappa_2)(u-\kappa_2)\Delta^2 u)\|_{L^2}
\\
=&K_{11}+K_{12}+K_{13}+K_{14}.
\end{aligned}
\end{equation}
We estimate $K_{11}$ as
\begin{equation}
\begin{aligned}\label{5-4}
K_{11}=&\|\Lambda^{-s}u\|_{L^2} \|\Lambda^{-s}(|\nabla u|^2\Delta u)\|_{L^2}
\\
\lesssim&\|\Lambda^{-s}u\|_{L^2} \| |\nabla u|^2\Delta u \|_{L^{\frac1{\frac12+\frac s3}}}
\\
\lesssim&\|\Lambda^{-s}u\|_{L^2} \|\nabla u\|_{L^{\infty}}\|\nabla u\|_{L^{\frac3s}}\|\Delta u\|_{L^2}
\\
\lesssim&\|\Lambda^{-s}u\|_{L^2} \|\nabla^2u\|_{L^2}^{\frac12}\|\nabla^3u\|_{L^2}^{\frac12}\|\nabla^2u\|_{L^2}^{\frac12+s}\|\nabla^3u\|_{L^2}^{\frac12-s}\|\nabla^2u\|_{L^2}
\\
\lesssim&\|\nabla^2u\|_{L^2}\|\Lambda^{-s}u\|_{L^2} \|\nabla^2u\|_{L^2}^{1+s}\|\nabla^3u\|_{L^2}^{1-s}
\\
\lesssim&\|\Lambda^{-s}u\|_{L^2}(\|\nabla^2u\|_{L^2}^{2}+\|\nabla^3u\|_{L^2}^{2}),
\end{aligned}
\end{equation}
where we have used the fact $\|\nabla^2u\|_{L^2}\leq C$, which was proved in Theorem \ref{thm1.1}. $K_{12}$ can be estimated as
\begin{equation}
\begin{aligned}\label{5-5}
K_{12}=&\|\Lambda^{-s}u\|_{L^2} \| \Lambda^{-s}(u|\Delta u|^2)\|_{L^2}
\\
\lesssim&\|\Lambda^{-s}u\|_{L^2} \| u|\Delta u|^2 \|_{L^{\frac1{\frac12+\frac s3}}}
\\
\lesssim&\|\Lambda^{-s}u\|_{L^2} \|  u\|_{L^{\infty}}\|\Delta u\|_{L^{\frac3s}}\|\Delta u\|_{L^2}
\\
\lesssim&\|\Lambda^{-s}u\|_{L^2} (\|\nabla u\|_{L^2}^{\frac12}\|\nabla^2u\|_{L^2}^{\frac12})\|\nabla^3u\|_{L^2}^{\frac12+s}\|\nabla^4u\|_{L^2}^{\frac12-s}\|\nabla^2u\|_{L^2}
\\
\lesssim&\|\nabla u\|_{H^1}\|\Lambda^{-s}u\|_{L^2} \|\nabla^3u\|_{L^2}^{\frac12+s}\|\nabla^4u\|_{L^2}^{\frac12-s}\|\nabla^2u\|_{L^2}
\\
\lesssim&\|\Lambda^{-s}u\|_{L^2}(\|\nabla^2u\|_{L^2}^{2}+\|\nabla^3u\|_{L^2}^{2}+\|\nabla^4u\|_{L^2}^{2}),
\end{aligned}
\end{equation}
where we have used the fact $\|\nabla u\|_{H^1}\leq C$, which was proved in Theorem \ref{thm1.1}. We also have
\begin{equation}
\begin{aligned}\label{5-6}
K_{13}=&\|\Lambda^{-s}u\|_{L^2} \| \Lambda^{-s}(u\nabla u\nabla\Delta u)\|_{L^2}
\\
\lesssim&\|\Lambda^{-s}u\|_{L^2} \|u\nabla u\nabla\Delta u\|_{L^{\frac1{\frac12+\frac s3}}}
\\
\lesssim&\|\Lambda^{-s}u\|_{L^2} \|  u\|_{L^{\infty}}\|\nabla u\|_{L^{\frac3s}}\|\nabla\Delta u\|_{L^2}
\\
\lesssim&\|\Lambda^{-s}u\|_{L^2} (\|\nabla u\|_{L^2}^{\frac12}\|\nabla^2u\|_{L^2}^{\frac12})\|\nabla^2u\|_{L^2}^{\frac12+s}\|\nabla^3u\|_{L^2}^{\frac12-s}\|\nabla^3u\|_{L^2}
\\
\lesssim&\|\nabla u\|_{H^1}\|\Lambda^{-s}u\|_{L^2} \|\nabla^2u\|_{L^2}^{\frac12+s}\|\nabla^3u\|_{L^2}^{\frac32-s}
\\
\lesssim&\|\Lambda^{-s}u\|_{L^2}(\|\nabla^2u\|_{L^2}^{2}+\|\nabla^3u\|_{L^2}^{2} ),
\end{aligned}
\end{equation}
and
\begin{equation}
\begin{aligned}\label{5-7}
K_{14}=&\|\Lambda^{-s}u\|_{L^2} \| \Lambda^{-s}((u+\kappa_2)(u-\kappa_2)\Delta^2 u)\|_{L^2}
\\
\lesssim&\|\Lambda^{-s}u\|_{L^2} \|(u+\kappa_2)(u-\kappa_2)\Delta^2 u\|_{L^{\frac1{\frac12+\frac s3}}}
\\
\lesssim&\|\Lambda^{-s}u\|_{L^2} \|  u+\kappa_2\|_{L^{\infty}}\|   u-\kappa_2\|_{L^{\frac3s}}\| \Delta^2 u\|_{L^2}
\\
\lesssim&\|\Lambda^{-s}u\|_{L^2}  \|\nabla u\|_{L^2}^{\frac12}\|\nabla^2u\|_{L^2}^{\frac12} \|\nabla u\|_{L^2}^{\frac12+s}\|\nabla^2u\|_{L^2}^{\frac12-s}\|\nabla^4u\|_{L^2}
\\
=&\|\Lambda^{-s}u\|_{L^2}  \|\nabla u\|_{L^2}^{1+s}\|\nabla^2u\|_{L^2}^{1-s} \|\nabla^4u\|_{L^2}
\\
\lesssim&\|\Lambda^{-s}u\|_{L^2}  \| u\|_{L^2}^{\frac12+\frac s2}\| \nabla^2u\|_{L^2}^{\frac12+\frac s2}\|\nabla^2u\|_{L^2}^{1-s} \|\nabla^4u\|_{L^2}
\\
\lesssim&\| u\|_{L^2}^{\frac12+\frac s2}\|\nabla^2u\|_{L^2}^{\frac12-\frac s2}  \|\Lambda^{-s}u\|_{L^2} \|\nabla^2u\|_{L^2}  \|\nabla^4u\|_{L^2}
\\
\lesssim&\|\Lambda^{-s}u\|_{L^2}(\|\nabla^2u\|_{L^2}^{2}+\|\nabla^4u\|_{L^2}^{2} ),
\end{aligned}
\end{equation}
where we have used the fact $\| u\|_{H^2}\leq C$, which was proved in Theorem \ref{thm1.1}.  Combining (\ref{5-3})-(\ref{5-7}) together gives
\begin{equation}
\label{5-8}
K_1\lesssim\|\Lambda^{-s}u\|_{L^2}(\|\nabla^2u\|_{L^2}^{2}+\|\nabla^3u\|_{L^2}^{2}+\|\nabla^4u\|_{L^2}^{2}).
\end{equation}
Next, on the basis of the estimates (\ref{5-4})-(\ref{5-6}), we estimate $K_2$ as
\begin{equation}
\begin{aligned}\label{5-9}
K_2=&-\kappa_1\int_{\mathbb{R}^3}\Lambda^{-s}\left[\Delta\left(u|\nabla u|^2\right)\right]\cdot\Lambda^{-s}udx
\\
\lesssim& \int_{\mathbb{R}^3}\Lambda^{-s}(|\nabla u|^2\Delta u)\cdot\Lambda^{-s}udx +\int_{\mathbb{R}^3}\Lambda^{-s}(u|\Delta u|^2)\cdot\Lambda^{-s}udx+
\int_{\mathbb{R}^3}\Lambda^{-s}(u\nabla u\nabla\Delta u)\cdot\Lambda^{-s}udx
\\
\lesssim&\|\Lambda^{-s}u\|_{L^2}(\|\nabla^2u\|_{L^2}^{2}+\|\nabla^3u\|_{L^2}^{2}+\|\nabla^4u\|_{L^2}^{2}).
\end{aligned}
\end{equation}
For $K_3$, we have
\begin{equation}
\begin{aligned}\label{5-10}
K_3=&\int_{\mathbb{R}^3}\Lambda^{-s}\Delta\left[(u-1)^2(u+1)^2(u^2+h_0)\right]\cdot\Lambda^{-s}udx
\\
\lesssim&\int_{\mathbb{R}^3}|\Lambda^{-s} \left[ (u+1)^2(u^2+h_0)|\nabla u|^2\right]||\Lambda^{-s}u|dx
\\
&+
\int_{\mathbb{R}^3}|\Lambda^{-s} \left[ (u-1)(u+1) (u^2+h_0)|\nabla u|^2\right]||\Lambda^{-s}u|dx
\\
&+
\int_{\mathbb{R}^3}|\Lambda^{-s} \left[ (u-1)(u+1)^2u|\nabla u|^2\right]||\Lambda^{-s}u|dx
\\
&+
\int_{\mathbb{R}^3}|\Lambda^{-s} \left[ (u-1)(u+1)^2(u^2+h_0) \Delta u \right]||\Lambda^{-s}u|dx
\\
&+
\int_{\mathbb{R}^3}|\Lambda^{-s} \left[ (u-1)^2(u+1)(u^2+h_0) \Delta u \right]||\Lambda^{-s}u|dx
\\
&+
\int_{\mathbb{R}^3}|\Lambda^{-s} \left[ (u-1)^2(u^2+h_0)|\nabla u|^2\right]||\Lambda^{-s}u|dx
\\
&+
\int_{\mathbb{R}^3}|\Lambda^{-s} \left[ (u-1)^2(u+1) u|\nabla u|^2\right]||\Lambda^{-s}u|dx
\\
&+
\int_{\mathbb{R}^3}|\Lambda^{-s} \left[ (u+1)^2(u-1)^2|\nabla u|^2\right]||\Lambda^{-s}u|dx
\\
&+
\int_{\mathbb{R}^3}|\Lambda^{-s} \left[u (u+1)^2(u-1)^2  \Delta u \right]||\Lambda^{-s}u|dx
\\
=&K_{31}+K_{32}+K_{33}+K_{34}+K_{35}+K_{36}+K_{37}+K_{38}+K_{39}.
\end{aligned}
\end{equation}
Note that
\begin{equation}
\begin{aligned}
\label{5-11}
K_{31}=&\int_{\mathbb{R}^3}|\Lambda^{-s} \left[ (u+1)^2(u^2+h_0)|\nabla u|^2\right]||\Lambda^{-s}u|dx
\\
\lesssim&\|\Lambda^{-s}u\|_{L^2}\|\Lambda^{-s} \left[ (u+1)^2(u^2+h_0)|\nabla u|^2\right]\|_{L^2}
\\
\lesssim&\|\Lambda^{-s}u\|_{L^2}\|  (u+1)^2(u^2+h_0)|\nabla u|^2 \|_{L^{\frac1{\frac12+\frac s3}}}
\\
\lesssim&\|\Lambda^{-s}u\|_{L^2}\|u^2+h_0\|_{L^{\infty}}\|u+1\|_{L^{\frac 3s}}\|u+1\|_{L^6}\|\nabla u\|_{L^6}^2
\\
\lesssim&\|\Lambda^{-s}u\|_{L^2}(\|\nabla u\|_{L^2}\|\nabla^2u\|_{L^2}+|h_0|) \|\nabla u\|_{L^2}^{\frac12+s} \|\nabla^2u\|_{L^2}^{\frac12-s} \|\nabla u\|_{L^2}\|\nabla^2 u\|_{L^2}^2
\\
\lesssim&\|\Lambda^{-s}u\|_{L^2}\|\nabla^2 u\|_{L^2}^2,
\end{aligned}
\end{equation}
where we have used the fact $\| u\|_{H^2}\leq C$, which was proved in Theorem \ref{thm1.1}. Similarly, we obtain
\begin{equation}
\label{5-12}
K_{32}+K_{33}+K_{36}+K_{37}+K_{38}\lesssim \|\Lambda^{-s}u\|_{L^2}\|\nabla^2 u\|_{L^2}^2.
\end{equation}
For $K_{34}$, we can estimate as
\begin{equation}
\label{5-13}
\begin{aligned}
K_{34}=&\int_{\mathbb{R}^3}|\Lambda^{-s} \left[ (u-1)(u+1)^2(u^2+h_0) \Delta u \right]||\Lambda^{-s}u|dx
\\
\lesssim&\|\Lambda^{-s}u\|_{L^2}\|\Lambda^{-s} \left[ (u-1)(u+1)^2(u^2+h_0) \Delta u \right]\|_{L^2}
\\
\lesssim&\|\Lambda^{-s}u\|_{L^2}\|  (u-1)(u+1)^2(u^2+h_0) \Delta u  \|_{L^{\frac1{\frac12+\frac s3}}}
\\
\lesssim&\|\Lambda^{-s}u\|_{L^2}\|u^2+h_0\|_{L^{\infty}}\|u-1\|_{L^{\frac3s}}\|u+1\|_{L^{\infty}}^2\|\Delta u\|_{L^2}
\\
\lesssim&\|\Lambda^{-s}u\|_{L^2}(\|\nabla u\|_{L^2}\|\nabla^2u\|_{L^2}+|h_0|) \|\nabla u\|_{L^2}^{\frac12+s} \|\nabla^2u\|_{L^2}^{\frac12-s}
(\|\nabla u\|_{L^2}^{\frac12}\|\nabla^2u\|_{L^2}^{\frac12})^2\|\nabla^2u\|_{L^2}
\\
\lesssim&\|\Lambda^{-s}u\|_{L^2}\|\nabla^2 u\|_{L^2}^2,
\end{aligned}
\end{equation}
where we have used the fact $\| u\|_{H^2}\leq C$, which was proved in Theorem \ref{thm1.1}. Similarly, we obtain
\begin{equation}
\label{5-14}
K_{35}+K_{39}\lesssim \|\Lambda^{-s}u\|_{L^2}\|\nabla^2 u\|_{L^2}^2.
\end{equation}
It then follows from (\ref{5-10})-(\ref{5-14}) that
\begin{equation}
\label{5-15}
K_3\lesssim\|\Lambda^{-s}u\|_{L^2}\|\nabla^2 u\|_{L^2}^2.
\end{equation}
Plugging (\ref{5-8}), (\ref{5-9}) and (\ref{5-15}) into (\ref{5-2}), we deduce (\ref{5-1}).

\end{proof}
\subsection{Decay estimates}

In the following, we prove Theorem \ref{thm1.2} for $s\in[0,\frac12]$.
\begin{proof}[Proof of Theorem \ref{thm1.2}] Define
$$
\mathcal{E}_{-s}(t):=\|\Lambda^{-s}u(t)\|_{L^2}^2 .
$$
For inequality (\ref{5-1}), integrating in time, by the bound (\ref{1-3}), we have
\begin{equation}
\label{6-5c}
\begin{aligned}
\mathcal{E}_{-s}(t)\leq&\mathcal{E}_{-s}(0)+C\int_0^t\|\nabla^2u\|_{H^2}^2\sqrt{\mathcal{E}_{-s}(\tau)}d\tau
\\
\leq&C_0\left(1+\sup_{0\leq\tau\leq t}\sqrt{\mathcal{E}_{-s}(\tau)}d\tau\right),
\end{aligned}
\end{equation}
which implies (\ref{1-4}) for $s\in[0,\frac12]$, that is
\begin{equation}
\label{6-5}
\|\Lambda^{-s}u(t)\|_{L^2}^2\leq C_0.
\end{equation}
Moreover, if  $l=1,2,\cdots,N-1$, we may use Lemma \ref{lem2.2} to have
 $$
\|\nabla^{l+2}f\|_{L^2}\geq C\|\Lambda^{-s}f\|_{L^2}^{-\frac2{l+s}}\|\nabla^lf\|_{L^2}^{1+\frac2{l+s}}.
$$
Then, by this facts and (\ref{6-5}), we get
\begin{equation}\label{7-1}
\|\nabla^{l+2}u\|_{L^2}^2\geq C_0(\|\nabla^{l}u\|_{L^2}^2)^{1+\frac2{k+s}}.
\end{equation}
 Hence, for $1=1,2,\cdots,N-1$,
$$
\|\nabla^{l+2} u \|_{H^{N-l }}^2\geq C_0(\|\nabla^{l} u \|_{H^{N-l}}^2)^{1+\frac2{l+s}}.
$$
Thus, we deduce from (\ref{6-3}) with $m=N$ the following inequality
\begin{equation}
\label{7-2}
\frac d{dt}\mathcal{E}_l^N+C_0\left(\mathcal{E}_l^N\right)^{1+\frac2{l+s}}\leq 0,\quad\hbox{for}~l=1,2,\cdots,N-1.
\end{equation}
Solving this inequality directly gives
\begin{equation}
\label{7-3}
\mathcal{E}_l^N(t)\leq C_0(1+ t)^{-\frac12(l+s)},\quad\hbox{for}~l=1,2,\cdots,N-1,
\end{equation}
which means (\ref{1-5}) holds. Hence, we   complete the proof of Theorem \ref{thm1.2}.

\end{proof}

\section*{Acknowledgement}
The author  appreciate very much the useful suggestions of Prof. Hao Wu. His suggestions will benefit the improvement of the paper and my future research.

{\small

}
}

\end{document}